\newtheorem{theorem}{Theorem}[section]
\newtheorem{lemma}[theorem]{Lemma}
\newtheorem{proposition}[theorem]{Proposition}
\theoremstyle{remark}
\newtheorem{remark}[theorem]{Remark}
\theoremstyle{definition}
\newcommand{\cC}{\mathcal{C}}
\newcommand{\cH}{\mathcal{H}}
\newcommand{\cP}{\mathcal{P}}
\newcommand{\cR}{\mathcal{R}}
\newcommand{\bN}{\mathbb{N}}
\newcommand{\bF}{\mathbb{F}}
\newcommand{\bP}{\mathbb{P}}
\newcommand{\bQ}{\mathbb{Q}}
\newcommand{\bZ}{\mathbb{Z}}
\newcommand{\lcm}{\operatorname{lcm}}
\newcommand{\Frob}{\operatorname{Frob}}
\newcommand{\Gal}{\operatorname{Gal}}
\newcommand{\res}{\operatorname{res}}
\newcommand{\ord}{\operatorname{ord}}
\title{Bounded gaps between prime polynomials with a given primitive root}
\begin{document}

\author{Lee Troupe}
\subjclass[2000]{11N36; 11T06}
\keywords{bounded gaps between primes; polynomials; finite fields}
\address{Department of Mathematics, Boyd Graduate Studies Research Center, University of Georgia, Athens, GA 30602, USA}
\email{ltroupe@math.uga.edu}
\let\thefootnote\relax\footnote{The author was supported by NSF RTG Grant DMS-1344994.}



%

\begin{abstract}
A famous conjecture of Artin states that there are infinitely many prime numbers for which a fixed integer $g$ is a primitive root, provided $g \neq -1$ and $g$ is not a perfect square. Thanks to work of Hooley, we know that this conjecture is true, conditional on the truth of the Generalized Riemann Hypothesis. Using a combination of Hooley's analysis and the techniques of Maynard-Tao used to prove the existence of bounded gaps between primes, Pollack has shown that (conditional on GRH) there are bounded gaps between primes with a prescribed primitive root. In the present article, we provide an unconditional proof of the analogue of Pollack's work in the function field case; namely, that given a monic polynomial $g(t)$ which is not an $v$th power for any prime $v$ dividing $q-1$, there are bounded gaps between monic irreducible polynomials $P(t)$ in $\mathbb{F}_q[t]$ for which $g(t)$ is a primitive root (which is to say that $g(t)$ generates the group of units modulo $P(t)$).  In particular, we obtain bounded gaps between primitive polynomials, corresponding to the choice $g(t) = t$.
\end{abstract}

%

\maketitle

\section{Introduction}

Among the most prominent conjectures in number theory is the prime $k$-tuples conjecture of Hardy and Littlewood, the qualitative version of which states that for any admissible tuple of integers $\cH = \{h_1, \ldots, h_k\}$, there are infinitely many natural numbers $n$ such that the shifted tuple $n + \cH = \{n + h_1, \ldots, n + h_k\}$ consists entirely of primes. To this day, we do not know of a single admissible tuple for which the above statement is true.

We can instead ask for something weaker: Can we show that infinitely many shifts of admissible $k$-tuples contain just two or more primes? In 2013, Yitang Zhang stunned the mathematical world by demonstrating that, for every sufficiently long tuple $\cH$, there are infinitely many natural numbers $n$ for which $n + \cH$ contains at least two primes, thereby establishing the existence of infinitely many bounded gaps between consecutive primes \cite{zhang14}. Zhang's breakthrough was soon followed by work of Maynard \cite{may15} and Tao, who independently established that infinitely many shifts of admissible $k$-tuples contain $m$ primes, for any $m \geq 2$, provided $k$ is large enough with respect to $m$. As a consequence, we have not only bounded gaps between primes, but also that $\liminf_{n \to \infty} p_{n+m} - p_{n} < \infty$ (here $p_n$ denotes the $n$th prime number).

The Maynard-Tao machinery can be utilized to probe questions concerning bounded gaps between primes in other contexts. Let $q$ be a power of a prime and consider the ring $\bF_q[t]$. We say that an element $p$ of $\bF_q[t]$ is \emph{prime} if $p$ is monic and irreducible. The following theorem, a bounded gaps result for $\bF_q[t]$, is due to Castillo, Hall, Lemke Oliver, Pollack, and Thompson \cite{chlopt}.

\begin{theorem}\label{chlopt}
Let $m \geq 2$. There exists an integer $k_0$ depending on $m$ but independent of $q$ such that for any admissible $k$-tuple $\{h_1, \ldots, h_k\} \subset \bF_q[t]$ with $k \geq k_0$, there are infinitely many $f \in \bF_q[t]$ such that at least $m$ of $f + h_1, \ldots, f + h_k$ are prime.
\end{theorem}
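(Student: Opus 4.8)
The plan is to transplant the Maynard--Tao sieve to $\bF_q[t]$, taking advantage of the fact that its one deep analytic ingredient --- equidistribution of primes in arithmetic progressions --- is available here unconditionally, as a consequence of the Riemann Hypothesis for curves over finite fields (Weil). Fix an admissible tuple $\cH = \{h_1, \ldots, h_k\}$ with $k \ge k_0$, all $\deg h_i$ bounded; for a large parameter $n$, run the sieve over monic $f \in \bF_q[t]$ of degree $n$ (after a ``$W$-trick'' restriction $f \equiv v \pmod{W}$, $W = \prod_{\deg \varpi \le D_0} \varpi$, isolating the small-prime local factors). Choosing a degree threshold $D$ proportional to $n$, attach to each $f$ the nonnegative weight
$$w_f = \Big( \sum_{\substack{d_1, \ldots, d_k \text{ monic}\\ d_i \mid f + h_i,\ \sum_i \deg d_i \le D}} \lambda_{d_1, \ldots, d_k} \Big)^2,$$
with $\lambda$ the Maynard coefficients built from a fixed smooth $F$ supported on $\{x \in [0,1]^k : \sum x_i \le 1\}$, where $\deg d_i / D$ plays the role of $\log d_i / \log R$ and the Möbius and Euler functions of $\bF_q[t]$ replace their integer analogues. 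The object of study is
$$S = \sum_{\substack{f \text{ monic}\\ \deg f = n}} \Big( \sum_{i=1}^k \mathbf{1}[f + h_i \text{ prime}] - (m-1) \Big) w_f;$$
since $S > 0$ forces some $f$ of degree $n$ with at least $m$ of the $f + h_i$ prime, exhibiting such an $f$ for every large $n$ gives infinitely many.

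Write $S = S_2 - (m-1) S_1$ with $S_1 = \sum_f w_f$ and $S_2 = \sum_i \sum_f \mathbf{1}[f+h_i \text{ prime}] w_f$. Expanding the square and interchanging summation reduces $S_1$ to counting monic $f$ of degree $n$ in prescribed classes modulo $\lcm$'s of the $d_i, d_i'$. Here $\bF_q[t]$ gives a genuine simplification over $\bZ$: the number of monic polynomials of degree $n$ in a fixed class modulo $M$ is \emph{exactly} $q^{n-\deg M}$ whenever $\deg M \le n$, with no error at all. Hence $S_1$ has no sieve remainder, and diagonalizing the resulting quadratic form yields a clean main term of the shape $c\, q^n\, \mathfrak{S}\, D^{k} I_k(F)(1+o(1))$, where $I_k(F) = \int_{[0,1]^k} F^2$ and $\mathfrak{S}$ is the singular series $\prod_\varpi$ of the usual local densities; admissibility guarantees $\mathfrak{S} > 0$.

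The arithmetic enters through $S_2$, where the inner count is the number of prime polynomials of degree $n$ in a class modulo $M$. For this I would invoke the prime polynomial theorem in progressions: for $\gcd(a, M) = 1$,
$$\#\{P \text{ prime},\ \deg P = n,\ P \equiv a \!\!\pmod M\} = \frac{1}{\Phi(M)}\frac{q^n}{n} + O\!\Big( \frac{(\deg M)\, q^{n/2}}{n} \Big),$$
a consequence of Weil's bounds for the associated $L$-functions, valid \emph{unconditionally and uniformly in} $q$. Because the error saves a full power $q^{n/2}$ --- rather than the merely logarithmic savings of Bombieri--Vinogradov --- summing the remainders against the bounded $\lambda$'s over all $M$ with $\deg M \le 2D$ stays negligible provided $D < n/4$; this is a fixed level of distribution $\theta = \tfrac12$, obtained with no hypotheses and uniformly in $q$. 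The main term of $S_2$ then has the shape $c\, \tfrac{q^n}{n}\, \mathfrak{S}\, D^{k+1} \sum_i J_k^{(i)}(F)(1+o(1))$, with $J_k^{(i)}(F)$ the Maynard integrals.

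Combining, the common local factors (singular series and $W$-normalization) cancel in the ratio, giving $S_2/S_1 = (1+o(1))\,\tfrac{\theta}{2}\,\sum_i J_k^{(i)}(F)/I_k(F)$, a quantity \emph{independent of $q$}. It therefore suffices to pick $F$ with $\sum_i J_k^{(i)}(F)/I_k(F) > 2(m-1)/\theta$; this is exactly the variational problem Maynard solves, and his construction meets the bound once $k \ge k_0(m)$, with $k_0$ depending only on $m$ (and on $\theta$, which is the absolute constant $\tfrac12$ regardless of $q$). I expect the principal difficulty to be bookkeeping rather than conceptual: organizing the diagonalizations so that the $o(1)$ terms, the singular series, and especially the $W$-trick factors are controlled \emph{uniformly in} $q$ --- the count of low-degree primes, hence the small-prime local densities, varies with $q$, and one must verify this never degrades the $q$-independent ratio above --- together with checking that Maynard's optimization of $F$ transfers verbatim. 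The analytic core, by contrast, is \emph{easier} than over $\bZ$: Weil's theorem hands us for free the equidistribution that classically requires Bombieri--Vinogradov.
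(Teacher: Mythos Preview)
Your proposal is correct and follows essentially the same route as the argument the paper recapitulates in Section~3 (itself a summary of \cite{chlopt}): after a $W$-trick, set up $S_1$ and $S_2$ with Maynard weights, use exact residue counting for $S_1$ and Weil-strength equidistribution of primes in progressions for $S_2$, and then invoke Maynard's bound $M_k \to \infty$ to make the $q$-independent ratio $S_2/S_1 \to \theta\, M_k$ exceed $m-1$ once $k \ge k_0(m)$. One cosmetic quibble: with the $W$-trick in place there is no separate singular series $\mathfrak{S}$---the local factors are absorbed into the $\Phi(W)^k/|W|^{k+1}$ normalization---so your main terms should carry only the latter, as in the paper's Proposition~\ref{asymps}.
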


In particular, if $\{h_1, \ldots, h_k\} \subset \bF_q[t]$ is a long enough admissible tuple, the difference in norm between primes in $\bF_q[t]$ is at most $\max_{1 \leq i \neq j \leq k} |h_i - h_j|$, infinitely often. (Here $|f| = q^{\deg f}$ for $f \in \bF_q[t]$.)

Artin's famous primitive root conjecture states that for any integer $g \neq -1$ and not a square, there are infinitely many primes for which $g$ is a primitive root; that is, there are infinitely many primes $p$ for which $g$ generates $(\bZ/p\bZ)^\ast$. Work of Hooley \cite{hoo67} establishes the truth of Artin's conjecture, assuming GRH; the following result due to Pollack \cite{MR3272281} is a bounded gaps result in this setting.

\begin{theorem}[conditional on GRH]\label{pollackprimroots}
Fix an integer $g \neq -1$ and not a square. Let $q_1 < q_2 < \ldots$ denote the sequence of primes for which $g$ is a primitive root. Then for each $m$,
\[
\liminf_{n \to \infty} (q_{n + m - 1} - q_n) \leq C_m,
\]
where $C_m$ is finite and depends on $m$ but not on $g$.
\end{theorem}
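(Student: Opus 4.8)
The plan is to run the Maynard--Tao sieve while inserting the primitive-root condition into the prime-detecting sum, and to supply the missing arithmetic input by making Hooley's conditional treatment of Artin's conjecture uniform over arithmetic progressions. Fix an admissible $k$-tuple $\cH = \{h_1, \ldots, h_k\}$, a product $W = \prod_{p \leq D_0} p$ of small primes (the usual ``$W$-trick''), and a residue class $a \pmod W$ with $(a + h_i, W) = 1$ for each $i$; I must also arrange that $a$ is compatible with the small-prime part of the Artin condition, so that a positive proportion of primes $\equiv a + h_i \pmod W$ can still have $g$ as a primitive root. With the Maynard weights
\[
w_n = \Bigl( \sum_{d_i \mid n + h_i \; \forall i} \lambda_{d_1, \ldots, d_k} \Bigr)^2, \qquad \lambda_{d_1, \ldots, d_k} = \Bigl( \prod_i \mu(d_i) \Bigr) F\Bigl( \tfrac{\log d_1}{\log R}, \ldots, \tfrac{\log d_k}{\log R} \Bigr)
\]
for a smooth $F$ supported on the standard simplex and $R = x^{\theta/2}$, I would study $S_1 = \sum_{x < n \leq 2x} w_n$ and $S_2 = \sum_{x < n \leq 2x} \bigl( \sum_{i=1}^k \mathbf{1}_g(n + h_i) \bigr) w_n$, where $\mathbf{1}_g(p) = 1$ exactly when $p$ is prime and $g$ is a primitive root mod $p$.

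The evaluation of $S_1$ is identical to Maynard's and contributes the integral $I_k(F)$. For $S_2$ the key is to show that, after fixing the coordinate $i$ and the divisibility pattern, the inner prime sum over $n + h_i$ acquires exactly the extra factor $\delta(g)$, the Artin density for $g$, relative to the ordinary Maynard prime sum. To see this I would, for each sieve modulus $d$ arising from the $\lambda$'s, insert Hooley's M\"obius decomposition of the primitive-root indicator: $g$ is a primitive root mod $p$ iff no prime $\ell$ divides the index $(p-1)/\ord_p(g)$, and $\ell$ divides this index precisely when $p$ splits completely in $\bQ(\zeta_\ell, g^{1/\ell})$. Splitting the primes $\ell$ into Hooley's small, medium, and large ranges and using GRH to control the medium and large ranges, the main term collapses to $\delta(g)\,\li(x)/\varphi(d)$, uniformly enough in $d$. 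Summing over the sieve moduli then yields $S_2 = (\delta(g) + o(1))\,\tilde{S}_2$, where $\tilde{S}_2$ is the ordinary prime sum carrying the familiar $\sum_{j=1}^k J_k^{(j)}(F)$.

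I expect the hardest step to be this uniformity: I need a Bombieri--Vinogradov theorem for primitive-root primes, namely
\[
\sum_{d \leq x^{\theta}} \max_{(a,d)=1} \Bigl| \pi_g(x; d, a) - \delta(g)\,\frac{\li(x)}{\varphi(d)} \Bigr| \ll_A \frac{x}{(\log x)^A}
\]
for some fixed $\theta > 0$. Under GRH, Hooley's estimates for the three ranges of $\ell$ can each be carried out with the modulus $d$ present and then summed; the delicate points are keeping the Chebotarev error from the small range summable over $d$ (here GRH supplies power-saving error, which is exactly what makes the sum over $d$ up to a positive power of $x$ permissible) and showing that the medium and large ranges remain negligible after the extra sum. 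This equidistribution statement is precisely the arithmetic ingredient that replaces the classical Bombieri--Vinogradov theorem in the Maynard--Tao engine.

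Finally I would form $S = S_2 - (m-1) S_1$. Dividing through, $S > 0$ follows once
\[
\delta(g)\, \frac{\sum_{j=1}^k J_k^{(j)}(F)}{I_k(F)} > m - 1
\]
for an admissible choice of $F$. Maynard's analysis shows the ratio $\sum_j J_k^{(j)}(F)/I_k(F)$ can be made $\gg \log k$, so the inequality holds as soon as $k$ exceeds a bound depending on $m$ and on a lower bound for $\delta(g)$. Since $\delta(g)$ is a positive rational multiple of Artin's constant and admits an absolute positive lower bound valid for every admissible $g$, the required $k$ — and hence $C_m$, taken as $\max_{i \neq j} |h_i - h_j|$ over the shortest admissible tuple of that length — depends only on $m$. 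A positive $S$ then guarantees infinitely many $n$ with at least $m$ of $n + h_1, \ldots, n + h_k$ prime and having $g$ as a primitive root, which is the claimed $\liminf$ bound.
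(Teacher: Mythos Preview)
Your approach differs from Pollack's in a crucial way, and the difference is exactly where your argument breaks down. You aim to prove that the primitive-root prime sum satisfies $\tilde S_2 = (\delta(g)+o(1))\,S_2$, with $\delta(g)$ the Artin density, and then absorb $\delta(g)$ into the choice of $k$. Pollack (and the present paper, in the function-field analogue) does \emph{not} do this. Instead the residue class $\alpha\pmod W$ is chosen so that a power-residue-symbol condition is forced on every $n+h_i$ (in $\bZ$: the Jacobi symbol $(g/(n+h_i))=-1$, together with congruence conditions ruling out $r\mid p-1$ for the small primes $r$), and one then proves that $S_2^{(m)}-\tilde S_2^{(m)}$ is $o\bigl(\Phi(W)^k|A|(\log|A|)^k/|W|^{k+1}\bigr)$. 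In other words, within the chosen residue class the primitive-root condition fails only on a negligible set, so $\tilde S_2\sim S_2$ with no factor of $\delta(g)$ at all. This is what makes $C_m$ independent of $g$: the sieve ratio $S_2/S_1$ is governed purely by $M_k$, with no arithmetic constant attached.

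Your final step asserts that ``$\delta(g)$ \ldots\ admits an absolute positive lower bound valid for every admissible $g$.'' This is false. Write $g=g_0^{\,h}$ with $g_0$ not a perfect power; the hypothesis that $g$ is not a square only forces $h$ to be odd. For each odd prime $\ell\mid h$ the Euler-product factor in Hooley's density changes from $1-\tfrac{1}{\ell(\ell-1)}$ to $1-\tfrac{1}{\ell-1}$, so
\[
\delta(g)\;\le\;A\prod_{\substack{\ell\mid h\\ \ell\text{ odd}}}\frac{\ell(\ell-2)}{\ell^2-\ell-1}
\;=\;A\prod_{\substack{\ell\mid h\\ \ell\text{ odd}}}\Bigl(1-\tfrac{1}{\ell}+O(\ell^{-2})\Bigr),
\]
which tends to $0$ as $h$ runs over products of increasingly many odd primes (e.g.\ $g=2^{3\cdot 5\cdot 7\cdots p_k}$). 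Hence the $k$ your argument produces must grow with $1/\delta(g)$, and your $C_m$ depends on $g$. Pollack's device---pushing the small-$r$ obstructions (including all $r\mid h$, which are absorbed into $W$ once $x$ is large enough) into the choice of $\alpha$, and then showing via GRH/Chebotarev that the remaining large $r$ contribute $\sum_r 1/r = o(1)$---is precisely the missing idea that restores uniformity in $g$.
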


Artin's conjecture can be formulated in the setting of polynomials over a finite field with $q$ elements, where $q$ is a prime power. Let $g \in \bF_q[t]$ be monic and not an $v$th power, for any $v$ dividing $q-1$; this is analogous to the requirement that $g$ not be a square in the integer case. We say that $g$ is a primitive root for a prime polynomial $p \in \bF_q[t]$ if $g$ generates the group $(\bF_q[t]/p\bF_q[t])^{\ast}$. In Bilharz's 1937 Ph.D. thesis \cite{bil37}, he confirms Artin's conjecture that there are infinitely many such $p$ for a given $g$ satisfying the above requirements, conditional on the Riemann hypothesis for global function fields, a result proved by Weil in 1948.

Motivated by the results catalogued above, we presently establish an unconditional result which can be viewed as a synthesis of Theorems \ref{chlopt} and \ref{pollackprimroots}.

\begin{theorem}\label{main}
Let $g$ be a monic polynomial in $\bF_q[t]$ such that $g$ is not a $v$th power for any prime $v$ dividing $q - 1$, and let $\bP_g$ denote the set of prime polynomials in $\bF_q[t]$ for which $g$ is a primitive root. For any $m \geq 2$, there exists an admissible $k$-tuple $\{h_1, \ldots, h_k\}$ such that there are infinitely many $f \in \bF_q[t]$ with at least $m$ of $f + h_1, \ldots, f + h_k$ belonging to $\bP_g$.
\end{theorem}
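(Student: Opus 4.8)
The plan is to run the function-field Maynard--Tao sieve underlying Theorem~\ref{chlopt} while grafting on the primitive-root condition through Hooley's criterion, arranging that this extra constraint removes only a bounded proportion of the prime values the sieve produces. Fix $m$, let $\cH = \{h_1,\dots,h_k\}$ be an admissible tuple with $k$ large (to be pinned down at the end), and let $w_f \ge 0$ be the associated Maynard--Tao weights, supported on $f$ of a large degree $N$ for which each $f+h_i$ has no small irreducible factors. Setting
\[
S_1 = \sum_f w_f, \qquad S_2 = \sum_{i=1}^{k}\sum_f w_f\,\mathbb{1}\!\left[\,f+h_i \in \bP_g\,\right],
\]
it is enough to prove $S_2 > (m-1)S_1$ for suitable $k$ and all sufficiently large $N$: this forces some $f$ to contribute at least $m$ elements of $\bP_g$ among $f+h_1,\dots,f+h_k$.

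To access the primitive-root condition I would use a sieve identity. For a prime $P$ of degree $N$ the group $(\bF_q[t]/P\bF_q[t])^\ast$ is cyclic of order $q^N-1$, so $g$ is a primitive root modulo $P$ exactly when $g$ is not a $v$th power modulo $P$ for every prime $v \mid q^N-1$, whence
\[
\mathbb{1}\!\left[\,g \text{ a primitive root mod } P\,\right] \;\ge\; 1 - \sum_{\substack{v \text{ prime}\\ v\,\mid\, q^N-1}} \mathbb{1}\!\left[\,g \text{ a } v\text{th power mod } P\,\right].
\]
Because $v \mid q^N-1$ forces $\zeta_v \in \bF_{q^N}$, the residue field at $P$, the event that $g$ is a $v$th power modulo $P$ is exactly the event that $P$ splits completely in the Kummer extension $K_v = \bF_q(t)(\zeta_v, g^{1/v})$, whose degree is $v\cdot\ord_v(q)$ in the generic case. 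Here the hypothesis on $g$ earns its keep: for $v \mid q-1$ one has $\ord_v(q)=1$ and $\zeta_v \in \bF_q$ already, so only the assumption that $g$ not be a $v$th power keeps $[K_v:\bF_q(t)]=v>1$; for $v \nmid q-1$ the constant-field extension already gives $\ord_v(q)\ge 2$, so $[K_v:\bF_q(t)]\ge 2$ automatically. Thus no factor $1-1/[K_v:\bF_q(t)]$ vanishes, and the function-field Artin constant $\prod_v\bigl(1-1/[K_v:\bF_q(t)]\bigr)$ is strictly positive.

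Feeding the lower bound into $S_2$, the leading term $\sum_i\sum_f w_f\,\mathbb{1}[f+h_i \text{ prime}]$ is precisely the quantity estimated in Theorem~\ref{chlopt}, and produces the Maynard--Tao main term, a positive multiple of $S_1$ that can be made arbitrarily large relative to $(m-1)$ by taking $k$ large. What remains is to bound, uniformly, the subtracted sums indexed by the primes $v \mid q^N-1$, where one splits $v$ into small, medium, and large ranges following Hooley. For small $v$, splitting completely in $K_v$ is a Chebotarev condition on $P$, expressible through finitely many residue classes to a fixed modulus dividing the conductor of $K_v$, and the sieve evaluates the inner sum as $\bigl(1/[K_v:\bF_q(t)]+o(1)\bigr)$ times the prime main term; summing these over small $v$ and invoking the positivity of the Artin constant keeps the total comfortably below any prescribed fraction of the main term.

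The crux is the medium and large ranges, where one must show that the count of primes $P$ of degree $N$ for which $g$ is a (large) $v$th power is negligible against the main term, uniformly. This is exactly the step at which Hooley, and Pollack after him, must assume GRH. In $\bF_q[t]$ the relevant Riemann Hypothesis is Weil's theorem, which supplies Chebotarev estimates with square-root-size error terms governed by the genera of the $K_v$, and hence effective for $v$ in a sufficiently wide middle range; for the very largest $v$ one instead observes that $g \equiv h^{v} \pmod P$ forces $P$ to divide a fixed nonzero element of controlled degree, so a direct divisor-counting bound shows there are few such $P$. These unconditional inputs make all three ranges summable, yield $S_2 > (m-1)S_1$ once $k$ is large enough, and thereby prove the theorem.
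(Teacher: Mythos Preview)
Your outline follows Pollack's template faithfully, but the crude lower bound
\[
\mathbb{1}\bigl[g\text{ primitive root mod }P\bigr]\;\ge\;1-\sum_{v\mid q^{N}-1}\mathbb{1}\bigl[g\text{ a }v\text{th power mod }P\bigr]
\]
does not do what you claim. After inserting the Chebotarev densities, the small-$v$ contribution you must subtract is at least $\sum_{v\mid q-1}1/v$ times the prime main term (since for $v\mid q-1$ one has $[K_v:\bF_q(t)]=v$), and this sum need not be below $1$: for the prime $q=2311$ one has $q-1=2\cdot3\cdot5\cdot7\cdot11$ and $\tfrac12+\tfrac13+\tfrac15+\tfrac17+\tfrac1{11}>1$, so your lower bound for $S_2$ is negative and says nothing. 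The positivity of the Artin \emph{product} $\prod_v\bigl(1-1/[K_v:\bF_q(t)]\bigr)$ is irrelevant here; it controls the full M\"obius inclusion--exclusion, not the truncated union bound you wrote down. Even switching to the full M\"obius identity only recovers $\tilde S_2\ge (c_q+o(1))S_2$ with a $q$-dependent constant $c_q\in(0,1)$, and you would then need to redo the medium/large ranges for all squarefree $d$, not just prime $v$.

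The paper sidesteps this difficulty entirely, and in doing so obtains the stronger conclusion $\tilde S_2\sim S_2$ (no Artin-constant loss). Two devices make this work. First, the tuple is pre-multiplied by $g$ and the residue class $\alpha\bmod W$ is chosen (via $(q-1)$st power reciprocity, Lemma~\ref{residueclass}) so that $\bigl(\tfrac{g}{n+h_i}\bigr)_{q-1}$ already generates $\bF_q^{\ast}$; this forces the contribution from every $v\mid q-1$ to vanish identically, eliminating exactly the terms that break your inequality. Second, the degree $\ell$ is restricted to primes, so any remaining $r\mid q^{\ell}-1$ with $r\nmid q-1$ has $\ord_r(q)=\ell$ and hence $r>\ell$; then a single uniform Chebotarev estimate (with genus bounded via Castelnuovo's inequality) handles all such $r$ at once, and $\sum_{r\mid q^{\ell}-1,\,r>\ell}1/r\le \ell^{-1}\omega(q^{\ell}-1)=o(1)$ shows the whole correction is negligible. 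There is no small/medium/large trichotomy and no Hooley-style divisor argument for the largest $v$.
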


\begin{remark}
A prime polynomial $a \in \bF_q[t]$ is called \emph{primitive} if $t$ is a primitive root for $a$; see \cite{ln97} for an overview of primitive polynomials. Taking $g = t$, we obtain as an immediate corollary the existence of bounded gaps between primitive polynomials.
\end{remark}

\noindent\textbf{Notation.} In what follows, $q$ is an arbitrary but fixed prime power and $\bF_q$ is the finite field with $q$ elements. The Greek letter $\Phi$ will denote the Euler phi function for $\bF_q[t]$; that is, $\Phi(f) = \#(\bF_q[t]/f\bF_q[t])^\ast$. The symbols $\ll, \gg,$ and the $O$ and $o$-notations have their usual meanings; constants implied by this notation may implicitly depend on $q$. Other notation will be defined as necessary.

\section{The necessary tools}

For a monic polynomial $a$ and a prime polynomial $P$ not dividing $a$ in $\bF_q[t]$, define the $d$-th power residue symbol $(a/P)_d$ to be the unique element of $\bF_q^{\ast}$ such that
\[
a^{\frac{|P| - 1}{d}} \equiv \left(\frac{a}{P}\right)_d \pmod P.
\]
Let $b \in \bF_q[t]$ be monic, and write $b = P_1^{e_1} \cdots P_s^{e_s}$. Define
\[
\left(\frac{a}{b}\right)_d = \prod_{j = 1}^s \left(\frac{a}{P_j}\right)_d^{e_j}.
\]
We will make use of a number of properties of the $d$-th power residue symbol. The following is taken from Propositions 3.1 and 3.4 of \cite{ros02}.

\begin{proposition}\label{properties}
The $d$-th power residue symbol has the following properties.
\begin{itemize}
\item[(a)] $\left(\frac{a_1}{b}\right)_d = \left(\frac{a_2}{b}\right)_d$ if $a_1 \equiv a_2 \pmod b$. 
\bigskip
\item[(b)]Let $\zeta \in \bF_q^{\ast}$ be an element of order dividing $d$. Then, for any prime $P \in \bF_q[t]$ with $P \nmid a$, there exists $a \in \bF_q[t]$ such that $\left(\frac{a}{P}\right)_d = \zeta$.
\end{itemize}
\end{proposition}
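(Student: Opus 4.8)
The plan is to treat the two assertions separately, disposing of the multiplicativity and reduction-to-the-prime-case in part (a) quickly, and reserving the surjectivity statement (b) for the main work.

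For part (a) I would argue directly from the definition. If $a_1 \equiv a_2 \pmod{P}$ for a prime $P \nmid a_1 a_2$, then raising to the power $(|P|-1)/d$ preserves the congruence, so $a_1^{(|P|-1)/d} \equiv a_2^{(|P|-1)/d} \pmod P$; since both sides are congruent mod $P$ to elements of $\bF_q^\ast$, and two distinct constants are incongruent mod $P$ (their difference is a nonzero constant, hence a unit not divisible by $P$), the symbols $(a_1/P)_d$ and $(a_2/P)_d$ coincide. For a general monic $b = P_1^{e_1}\cdots P_s^{e_s}$, the hypothesis $a_1 \equiv a_2 \pmod b$ forces $a_1 \equiv a_2 \pmod{P_j}$ for every $j$, so the prime case gives $(a_1/P_j)_d = (a_2/P_j)_d$, and the defining product $(a/b)_d = \prod_j (a/P_j)_d^{e_j}$ yields the claim.

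For part (b), the key structural fact is that $R := \bF_q[t]/P\bF_q[t]$ is a finite field with $|P|$ elements, so $R^\ast$ is cyclic of order $|P|-1$; I fix a generator $\gamma$. The assignment $a \mapsto (a/P)_d$ descends to a group homomorphism $\chi\colon R^\ast \to \bF_q^\ast$, since $(ab)^{(|P|-1)/d} = a^{(|P|-1)/d}\, b^{(|P|-1)/d}$. Because $\chi(a)^d \equiv a^{|P|-1} \equiv 1 \pmod P$ by Fermat's little theorem, the image of $\chi$ lies in the group $\mu_d$ of $d$-th roots of unity in $\bF_q^\ast$. I would then compute $\chi(\gamma) = \gamma^{(|P|-1)/d}$, which has exact order $d$ in $R^\ast$ (as $\gamma$ has order $|P|-1$ and $d \mid |P|-1$). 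Thus $\chi(\gamma)$ generates a cyclic group of order $d$, which must be all of $\mu_d$, so $\chi$ is surjective onto $\mu_d$. Given any target $\zeta$ of order dividing $d$, I then produce $a$ explicitly: write $\zeta = \chi(\gamma)^j = \chi(\gamma^j)$ for a suitable exponent $j$ and take $a = \gamma^j$ (lifted to $\bF_q[t]$), which is automatically coprime to $P$.

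The only genuine obstacle is verifying that $\mu_d$, the group of elements of $\bF_q^\ast$ of order dividing $d$, really has order exactly $d$ and coincides with the full group of $d$-th roots of unity inside the larger field $R^\ast = \bF_{|P|}^\ast$; this is what makes $\chi$ land in $\bF_q^\ast$ at all and what legitimizes the implication ``$\chi(\gamma)$ has order $d \Rightarrow \chi$ surjects onto $\mu_d$.'' This rests on the standing divisibility $d \mid q-1$, so that $\bF_q^\ast$, cyclic of order $q-1$, contains precisely $d$ such elements, all already lying in $\bF_q$; I would make this assumption explicit before running the counting argument. Everything else follows routinely from the cyclicity of unit groups of finite fields.
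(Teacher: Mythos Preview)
Your argument is correct. The paper does not actually prove this proposition; it simply imports it from Rosen's textbook (Propositions~3.1 and~3.4 of \cite{ros02}), so there is no in-paper proof to compare against. Your direct verification of (a) from the defining congruence and the multiplicative extension to composite moduli is the standard one, and your treatment of (b) via cyclicity of $(\bF_q[t]/P)^\ast$ and the observation that $\gamma^{(|P|-1)/d}$ already has exact order $d$ is exactly the right mechanism. Your closing remark is on point: the standing hypothesis $d\mid q-1$ (implicit in the paper's definition of the symbol as an element of $\bF_q^\ast$) is what guarantees that the $d$-th roots of unity in the residue field all lie in $\bF_q^\ast$, so that the surjectivity onto $\mu_d$ really gives values in $\bF_q^\ast$. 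It would be cleaner to state that hypothesis up front rather than defer it to the end, but nothing is missing.
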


We now state a special case of the general reciprocity law for $d$-th power residue symbols in $\bF_q[t]$, Theorem 3.5 in \cite{ros02}:

\begin{theorem}\label{reciprocitylaw}
Let $a, b \in \bF_q[t]$ be monic, nonzero and relatively prime. Then
\[
\Big(\frac{a}{b}\Big)_d = \Big(\frac{b}{a}\Big)_d (-1)^{\frac{q-1}{d} \deg(a)\deg(b)}.
\]
\end{theorem}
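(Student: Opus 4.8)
The plan is to reduce to the case of two monic primes and then identify the symbol with a power of a resultant. Both sides of the asserted identity are multiplicative: writing $a = \prod_i P_i$ and $b = \prod_j Q_j$ as products of monic primes with multiplicity (coprimality guarantees that every symbol below is defined), the definition gives $\left(\frac{a}{b}\right)_d = \prod_{i,j}\left(\frac{P_i}{Q_j}\right)_d$ and $\left(\frac{b}{a}\right)_d = \prod_{i,j}\left(\frac{Q_j}{P_i}\right)_d$, while $(-1)^{\frac{q-1}{d}\deg(a)\deg(b)} = \prod_{i,j}(-1)^{\frac{q-1}{d}\deg(P_i)\deg(Q_j)}$ because the degree is additive and the exponent is bilinear. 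Hence it suffices to prove the identity when $a = P$ and $b = Q$ are distinct monic primes of degrees $m$ and $n$. Implicit throughout is that $d \mid q-1$, which is exactly the condition making the symbol $\bF_q^\ast$-valued.

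For the prime case I would first express the symbol as a resultant. Fix a root $\theta$ of $P$ in $\overline{\bF_q}$, so that $\bF_q(\theta) = \bF_{q^m}$ and reduction modulo $P$ corresponds to the evaluation map $\bF_q[t]/(P) \xrightarrow{\sim} \bF_{q^m}$, $t \mapsto \theta$. Then $\left(\frac{Q}{P}\right)_d$, viewed inside $\bF_{q^m}^\ast$, equals $Q(\theta)^{(q^m-1)/d}$. Writing $(q^m-1)/d = (1 + q + \cdots + q^{m-1})\cdot\frac{q-1}{d}$ and using that $Q$ has coefficients in $\bF_q$ (so $Q(\theta)^{q^i} = Q(\theta^{q^i})$), the inner factor $Q(\theta)^{1+q+\cdots+q^{m-1}}$ is the norm $N_{\bF_{q^m}/\bF_q}(Q(\theta)) = \prod_{i=0}^{m-1} Q(\theta^{q^i})$. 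Since $\theta, \theta^q, \ldots, \theta^{q^{m-1}}$ are precisely the roots of $P$, this product is the resultant $\res(P,Q)$, so that
\[
\Big(\frac{Q}{P}\Big)_d = \res(P,Q)^{\frac{q-1}{d}}, \qquad\text{and symmetrically}\qquad \Big(\frac{P}{Q}\Big)_d = \res(Q,P)^{\frac{q-1}{d}}.
\]

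Finally I would invoke the antisymmetry of the resultant, $\res(Q,P) = (-1)^{mn}\res(P,Q)$ for monic polynomials of degrees $m$ and $n$, to obtain $\left(\frac{P}{Q}\right)_d = (-1)^{\frac{q-1}{d}mn}\left(\frac{Q}{P}\right)_d$, which is the claimed law with $mn = \deg(P)\deg(Q)$. The step I expect to require the most care is the identification $\left(\frac{Q}{P}\right)_d = Q(\theta)^{(q^m-1)/d}$ and its recognition as a norm: one must verify that the congruence defining the symbol is compatible with the evaluation isomorphism onto $\bF_{q^m}$, that the resulting $d$-th root of unity genuinely lands in $\bF_q^\ast$ (this is where $d \mid q-1$ enters), and that the product over Frobenius conjugates is exactly $\res(P,Q)$ rather than off by a sign or a leading-coefficient factor. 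Everything else is bookkeeping with the multiplicativity already set up in the first step.
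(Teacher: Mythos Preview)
The paper does not give its own proof of this statement; it is simply quoted as Theorem~3.5 of Rosen's \emph{Number Theory in Function Fields}, so there is nothing in the paper to compare against. Your argument is correct and is in fact the standard one (and essentially the proof in Rosen): reduce by bilinearity to distinct monic primes $P,Q$, identify $\left(\frac{Q}{P}\right)_d$ with $\res(P,Q)^{(q-1)/d}$ via the norm from $\bF_{q^{\deg P}}$ down to $\bF_q$, and then use $\res(Q,P)=(-1)^{\deg P\cdot\deg Q}\res(P,Q)$. The caveats you flag---that $d\mid q-1$ so the symbol lands in $\bF_q^\ast$, and that monicity of $P$ makes $\prod_i Q(\theta^{q^i})$ literally equal to $\res(P,Q)$ with no leading-coefficient correction---are exactly the points one has to check, and you have them right.
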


Another essential tool in our analysis is the Chebotarev density theorem. The following is a restatement of Proposition 6.4.8 in \cite{fj08}.

\begin{theorem}\label{chebotarev}
Write $K = \bF_q(t)$ and let $L$ be a finite Galois extension of $K$, and let $\cC$ be a conjugacy class of $\Gal(L/K)$. Let $\bF_{q^n}$ be the constant field of $L/K$. For each $\tau \in \cC$, suppose $\res_{\bF_{q^n}} \tau = \res_{\bF_{q^n}} \Frob_q^k$, where $k \in \bN$. The number of unramified primes $P$ of degree $k$ whose Artin symbol $\left(\frac{L/K}{P}\right)$ is $\cC$ is given by
\[
\frac{\#\cC}{m}\frac{q^k}{k} + O\left(\frac{\#\cC}{m}\frac{q^{k/2}}{k}(m + g_L)\right), 
\]
where $m = [L : K\bF_{q^n}]$, $g_L$ is the genus of $L/K$, and the constant implied by the big-$O$ is absolute.
\end{theorem}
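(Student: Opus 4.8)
The plan is to count the primes through Artin $L$-functions and to isolate the prescribed Frobenius class by orthogonality of the characters of $G = \Gal(L/K)$, using Weil's Riemann Hypothesis for function fields to bound every contribution except a main term coming from the constant-field extension. To each character $\chi$ of $G$ I would attach the $L$-function $L(u,\chi) = \prod_P \det(1 - \chi(\Frob_P)\,u^{\deg P})^{-1}$ in the variable $u = q^{-s}$, the product running over primes $P$ of $K$ (with the usual modification at the finitely many ramified primes). By the rationality theorem of Weil and Grothendieck, each $L(u,\chi)$ is a rational function of $u$; for $\chi$ nontrivial on the geometric subgroup $H = \Gal(L/K\bF_{q^n})$ it is in fact a polynomial, whose degree is controlled by the genus $g_L$ and the conductor. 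Taking the logarithmic derivative, the coefficient of $u^k$ in $u\,\frac{d}{du}\log L(u,\chi)$ is $\sum_{j\deg P = k}(\deg P)\,\chi(\Frob_P^{\,j})$, so that writing $L(u,\chi) = \prod_i (1 - \alpha_{i,\chi}u)^{\pm 1}$ this coefficient equals $\pm\sum_i \alpha_{i,\chi}^{\,k}$.

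Next I would expand the indicator of $\cC$ by column orthogonality, $\mathbf{1}_{\cC}(\sigma) = \frac{\#\cC}{\#G}\sum_{\chi}\overline{\chi(\cC)}\,\chi(\sigma)$, and sum the coefficient identity above against $\overline{\chi(\cC)}$. This produces
\[
k\cdot\#\{P : \deg P = k,\ \Frob_P \in \cC\} = \frac{\#\cC}{\#G}\sum_{\chi}\overline{\chi(\cC)}\,\Big(\textstyle\sum_i \pm\alpha_{i,\chi}^{\,k}\Big) + O(q^{k/2}),
\]
the final error absorbing the prime-power terms $j \geq 2$ (for which $\deg P \leq k/2$). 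I would then split the character sum according to whether $\chi$ is trivial on $H$. For such an inflated character, corresponding to an $n$-th root of unity $\omega = \chi(\Frob_q)$, one computes $L(u,\chi) = \zeta_K(\omega u) = [(1-\omega u)(1 - q\omega u)]^{-1}$, whose inverse root $q\omega$ has absolute value $q$; using the hypothesis that every $\tau \in \cC$ satisfies $\res_{\bF_{q^n}}\tau = \res_{\bF_{q^n}}\Frob_q^{\,k}$, so that $\chi(\cC) = \omega^{k}$, the $\omega$-sum over the $n$ inflated characters collapses to give the main term $\frac{\#\cC}{m}q^{k}$. For every character nontrivial on $H$, Weil's Riemann Hypothesis forces all the inverse roots $\alpha_{i,\chi}$ to have absolute value $q^{1/2}$, so each contributes $O(q^{k/2})$, and dividing through by $k$ returns the asserted formula.

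The step that will demand the most care is the bookkeeping around the constant-field extension: one must keep the arithmetic Frobenius of $K/\bF_q$ distinct from the geometric structure of $L$ over its own constant field $\bF_{q^n}$, check that classes $\cC$ not restricting to $\Frob_q^{\,k}$ yield an empty count (the inflated characters then cancel and the rest are $O(q^{k/2})$), and confirm that exactly the inflated characters supply the pole-type inverse roots of modulus $q$ while all geometrically nontrivial characters give polynomials subject to the Riemann Hypothesis. The remaining technical points are routine: the contribution of ramified primes and of higher prime powers is $O(q^{k/2})$ and is absorbed into the error, and the total number of inverse roots occurring across all the $L(u,\chi)$ is bounded by $O(n(m + g_L))$ via the conductor--discriminant formula together with Riemann--Hurwitz, which after multiplication by $\frac{\#\cC}{\#G} = \frac{\#\cC}{mn}$ yields precisely the error term $O\!\big(\frac{\#\cC}{m}\frac{q^{k/2}}{k}(m + g_L)\big)$.
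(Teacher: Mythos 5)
Your outline is essentially correct, but it is worth noting at the outset that the paper contains no proof of this statement at all: Theorem \ref{chebotarev} is quoted verbatim as a restatement of Proposition 6.4.8 of Fried--Jarden, so the honest comparison is with the proof given there. That proof is algebraic in flavor: it fixes $\tau \in \cC$, passes to the fixed field of $\langle \tau \rangle$ inside $L$, relates primes $P$ of $K$ with Artin symbol $\cC$ to orbits of primes of $L$ above them, and counts the latter using only the Riemann Hypothesis for the single curve $L$ (i.e., the Weil bound for one zeta function), with the genus $g_L$ and the degree $m$ entering through that bound. Your route is instead the classical analytic one: Artin $L$-functions $L(u,\chi)$ for all irreducible $\chi$ of $\Gal(L/K)$, column orthogonality to isolate $\cC$, the dichotomy between the $n$ characters inflated from $\Gal(\bF_{q^n}/\bF_q)$ (which give $\zeta_K(\omega u)$, hence the pole-type inverse roots $q\omega$ and the main term $\frac{\#\cC}{m}q^k$, using $\chi(\cC) = \omega^k$ exactly as you say) and the characters nontrivial on $H = \Gal(L/K\bF_{q^n})$. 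Your bookkeeping checks out at the delicate points: by Frobenius reciprocity, an irreducible $\chi$ nontrivial on $H$ has $\langle \chi|_H, 1_H \rangle = 0$, so Weil's theorem on the entirety of Artin $L$-functions over function fields does make $L(u,\chi)$ a polynomial with inverse roots of modulus at most $q^{1/2}$, and the total root count $O(n(m+g_L))$ follows from the conductor--discriminant formula and Riemann--Hurwitz as you claim, yielding exactly the stated error after multiplying by $\frac{\#\cC}{mn}$. The trade-off between the two approaches is that yours requires the deeper input of polynomiality of Artin $L$-functions (beyond RH for a single curve) but gives a transparent separation of main term and error, while Fried--Jarden's needs only the Weil bound for $L$ itself at the cost of a more intricate Galois-theoretic counting argument; both are legitimate, and your sketch, though it asserts rather than proves the standard facts it flags, has no genuine gap.
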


In our application, the extension $L/K$ will be the compositum of a Kummer extension and a cyclotomic extension of $K = \bF_q(t)$. The next three results will help us estimate $g_L$.

We say that an element $a \in K^\ast$ is \emph{geometric} at a prime $r \neq q$ if $K(\sqrt[r]{a})$ is a geometric field extension of $K$ (that is, the constant field of $K(\sqrt[r]{a})$ is the same as the constant field of $K$). Proposition 10.4 in \cite{ros02} concerns the genus of such extensions; we state it below.

\begin{proposition}\label{kummergenus}
Suppose $r \neq \operatorname{char} \bF_q$ is a prime and $K' = K(\sqrt[r]{a})$, $a \in K$ nonzero. Assume that $a$ is geometric at $r$ and that $a$ is not an $r$th power in $K^\ast$. With $g_{K'}$ denoting the genus of $K'/K$,
\[
2g_{K'} - 2 = -2r + R_a(r - 1),
\]
where $R_a$ is the sum of the degrees of the finitely many primes $P \in K$ where the order of $P$ in $a$ is not divisible by $r$.
\end{proposition}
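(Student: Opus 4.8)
The natural tool is the Riemann--Hurwitz genus formula applied to the separable extension $K'/K$. Since $a$ is geometric at $r$, the constant field of $K'$ is still $\bF_q$, so $K'/K$ is a geometric extension and the genera of $K$ and $K'$ are computed over the same constant field. Because $a$ is not an $r$th power and $r$ is prime, the polynomial $x^r - a$ is irreducible over $K$, so $[K' : K] = r$. As $K = \bF_q(t)$ has genus $g_K = 0$, Riemann--Hurwitz reads
\[
2g_{K'} - 2 = r(2g_K - 2) + \deg(\mathfrak{D}) = -2r + \deg(\mathfrak{D}),
\]
where $\mathfrak{D}$ is the different of $K'/K$. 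It therefore suffices to show that $\deg(\mathfrak{D}) = (r-1)R_a$.

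The first step is to pin down which primes ramify, and I would argue locally. For a prime $P$ of $K$ with valuation $v_P$, pass to the completion $K_P$ and write $a = \pi^{v_P(a)} u$ for a uniformizer $\pi$ and a unit $u$; then the radical extension $K_P(\sqrt[r]{a})$ has ramification index $r / \gcd(r, v_P(a))$. Since $r$ is prime, this index is $1$ when $r \mid v_P(a)$ and $r$ when $r \nmid v_P(a)$. Thus $P$ ramifies in $K'$ precisely when $r \nmid v_P(a)$, and such $P$ are finite in number, since $r \nmid v_P(a)$ forces $v_P(a) \neq 0$, so they occur only among the zeros and poles of $a$. When $P$ ramifies it is totally ramified: there is a unique prime $P'$ of $K'$ above it with $e_{P'} = r$, which forces residue degree $f_{P'} = 1$ and hence $\deg(P') = \deg(P)$.

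Next I would compute the different exponents. Because $r \neq \operatorname{char} \bF_q$, every ramified prime is tamely ramified, and the standard formula for the different under tame ramification gives the exact exponent $e_{P'} - 1 = r - 1$ at each $P'$ lying above a ramified $P$. Summing over the ramified primes,
\[
\deg(\mathfrak{D}) = \sum_{P \text{ ramified}} (r - 1)\deg(P') = (r-1)\sum_{r \nmid v_P(a)} \deg(P) = (r-1)R_a,
\]
since the final sum is exactly the defining sum for $R_a$. Substituting into the Riemann--Hurwitz formula yields $2g_{K'} - 2 = -2r + (r-1)R_a$, as claimed.

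The main obstacle is the ramification analysis rather than the bookkeeping: one must verify that the ramified primes are exactly those with $r \nmid v_P(a)$ and that the ramification there is total with trivial residue extension. The subtlety is that $K'/K$ need not be a Kummer extension in the strict sense, since $\bF_q$ may fail to contain the $r$th roots of unity; the local computation above sidesteps this, as the ramification index of a radical extension at a tame place is governed solely by $v_P(a) \bmod r$. Finally, the geometric hypothesis is what guarantees that no constant-field extension is hidden in $K'/K$ — without it the naive form of Riemann--Hurwitz fails, as a constant-field extension preserves the genus while contributing no different — and this is precisely what makes the clean computation above (with $g_K = 0$ and matching constant fields) valid.
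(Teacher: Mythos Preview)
Your argument is correct and is exactly the standard proof: Riemann--Hurwitz plus the tame ramification analysis of the radical extension. The paper itself does not supply a proof of this proposition; it merely quotes the result as Proposition~10.4 of \cite{ros02}, and Rosen's proof there proceeds along precisely the same lines as yours.
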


Fix an algebraic closure $\overline{K}$ of $K$. One can define an analogue of exponentiation in $\bF_q[t]$; that is, for $M \in \bF_q[t]$ and $u \in \overline{K}$, the symbol $u^M$ is again an element of $\overline{K}$. In particular, we have an analogue of cyclotomic field extensions. Define $\Lambda_M = \{u \in \overline{K} \mid u^M = 0\}$; then $K(\Lambda_M)/K$ is a \emph{cyclotomic} extension of $K$, and many properties of cyclotomic extensions of $\bQ$ carry over (at least formally) to this setting. See Chapter 12 of \cite{sal07} for details of this construction and for properties of these extensions. The following proposition, a formula for the genus of cyclotomic extensions of $\bF_q(t)$, is taken from Theorem 12.7.2.

\begin{proposition}\label{cyclotomicgenus}
Let $M \in \bF_q[t]$ be monic of the form $M = \prod_{i = 1}^r P_i^{\alpha_i}$, where the $P_i$ are distinct irreducible polynomials. Then
\[
2g_M - 2 = -2\Phi(M) + \sum_{i = 1}^r d_is_i\frac{\Phi(M)}{\Phi(P_i^{\alpha_i})} + (q-2)\frac{\Phi(M)}{q-1},
\]
where $g_M$ is the genus of $K(\Lambda_M)/K$, $d_i = \deg P_i$, and $s_i = \alpha_i\Phi(P_i^{\alpha_i}) - q^{d_i(\alpha_i - 1)}$.
\end{proposition}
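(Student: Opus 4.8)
The statement is the standard genus formula for Carlitz cyclotomic function fields, and the natural route is the Riemann--Hurwitz formula combined with a careful analysis of ramification. Writing $L = K(\Lambda_M)$, recall that $\Gal(L/K) \cong (\bF_q[t]/M\bF_q[t])^\ast$, so $[L:K] = \Phi(M)$, and that this extension is geometric (its constant field remains $\bF_q$). Since $K = \bF_q(t)$ has genus $0$, Riemann--Hurwitz gives
\[
2g_M - 2 = \Phi(M)(2\cdot 0 - 2) + \deg \mathfrak{D}_{L/K} = -2\Phi(M) + \deg \mathfrak{D}_{L/K},
\]
where $\mathfrak{D}_{L/K}$ is the different. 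Everything then reduces to computing $\deg \mathfrak{D}_{L/K}$, and the first point to establish is that the only ramified places are the primes $P_i \mid M$ and the infinite place $\infty = (1/t)$; every other place is unramified and contributes nothing.

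The contribution of $\infty$ is the easy part. In $L/K$ the inertia group at $\infty$ is the image of the scalar subgroup $\bF_q^\ast \hookrightarrow (\bF_q[t]/M)^\ast$, so $\infty$ is tamely ramified with ramification index $e_\infty = q-1$ (tame since $\gcd(q-1,q)=1$). For a tamely ramified place the different exponent is $e_\infty - 1 = q-2$, and by the fundamental identity the primes over $\infty$ satisfy $\sum_{\mathfrak{P}\mid\infty} f(\mathfrak{P}) = \Phi(M)/(q-1)$. As $\deg \infty = 1$, the total contribution of $\infty$ to $\deg \mathfrak{D}_{L/K}$ is $(q-2)\,\Phi(M)/(q-1)$, exactly the final term in the formula.

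For the finite primes I would first treat a single prime power $M = P^\alpha$ with $d = \deg P$, where $P$ is totally ramified with $e = \Phi(P^\alpha)$, and then pass to general $M$ by multiplicativity in the compositum. The core quantity is the different exponent $s$ at the unique prime over $P$ in $K(\Lambda_{P^\alpha})/K$. Working in the completion $K_P$, a torsion generator $\lambda \in \Lambda_{P^\alpha}\setminus\Lambda_{P^{\alpha-1}}$ is a uniformizer whose minimal polynomial is the Carlitz analogue of a cyclotomic polynomial, and $s$ equals the valuation at $\lambda$ of the derivative of that polynomial evaluated at $\lambda$; equivalently, one reads $s$ off the higher ramification filtration of the inertia group. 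Carrying this out yields $s = \alpha\,\Phi(P^\alpha) - q^{d(\alpha-1)}$, the wild term $q^{d(\alpha-1)}$ reflecting that the ramification is wild once $\alpha \geq 2$ (for $\alpha = 1$ the extension is tame and $s = \Phi(P) - 1 = q^d - 2$, consistent with the tame formula). For general $M = \prod_i P_i^{\alpha_i}$, the field $L$ is the compositum of the $K(\Lambda_{P_i^{\alpha_i}})$, and $P_i$ is unramified in every factor except the $i$-th, so its different exponent in $L/K$ is still $s_i$. Summing the local contribution $s_i\,\deg\mathfrak{P}$ over the primes $\mathfrak{P}\mid P_i$ — whose degrees $\deg\mathfrak{P} = f(\mathfrak{P})\deg P_i$ satisfy $\sum_{\mathfrak{P}\mid P_i} f(\mathfrak{P}) = \Phi(M)/\Phi(P_i^{\alpha_i})$ — produces the contribution $d_i s_i\,\Phi(M)/\Phi(P_i^{\alpha_i})$. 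Adding the finite and infinite contributions reconstructs $\deg\mathfrak{D}_{L/K}$ and hence the claimed identity.

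The main obstacle is the wild-ramification computation of $s_i$ at the finite primes when $\alpha_i \geq 2$: the tame part ($e-1$) and the infinite place are routine, but pinning down the extra wild contribution $q^{d_i(\alpha_i-1)}$ requires a genuine local analysis of the Carlitz torsion extension $K_{P_i}(\Lambda_{P_i^{\alpha_i}})/K_{P_i}$ — either through the higher ramification groups, or equivalently through the conductor--discriminant formula applied to the characters of $(\bF_q[t]/P_i^{\alpha_i})^\ast$ graded by their conductors. Once each $s_i$ is in hand, assembling the compositum contributions and invoking Riemann--Hurwitz is bookkeeping.
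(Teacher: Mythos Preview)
The paper does not actually prove this proposition: it is quoted verbatim as Theorem~12.7.2 of \cite{sal07} and used as a black box, so there is no ``paper's own proof'' to compare your attempt against. Your outline via Riemann--Hurwitz, identifying the ramified places as $P_1,\dots,P_r$ and $\infty$, computing the tame contribution $(q-2)\Phi(M)/(q-1)$ at $\infty$, and assembling the finite contributions $d_i s_i \Phi(M)/\Phi(P_i^{\alpha_i})$ from the local different exponents $s_i$, is correct and is exactly the standard argument (and essentially how the cited reference proceeds). The one genuinely nontrivial step, which you correctly flag, is the determination of $s_i = \alpha_i\Phi(P_i^{\alpha_i}) - q^{d_i(\alpha_i-1)}$ when $\alpha_i \geq 2$; this requires either the higher ramification filtration of $(\bF_q[t]/P_i^{\alpha_i})^\ast$ at $P_i$ (mirroring the classical computation for $\bQ_p(\zeta_{p^n})/\bQ_p$) or the conductor--discriminant formula, and your sketch of both routes is accurate even if not carried out in full.
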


Finally, if a function field $L/k$ with constant field $k$ is the compositum of two subfields $K_1/k$ and $K_2/k$, we can estimate the genus of $L$ given the genera of $K_1$ and $K_2$ using Castelnuovo's inequality (Theorem 3.11.3 in \cite{sti09}), stated below.

\begin{proposition}\label{castelnuovo}
Let $K_1/k$ and $K_2/k$ be subfields of $L/k$ satisfying
\begin{itemize}
	\item $L = K_1K_2$ is the compositum of $K_1$ and $K_2$, and

	\item $[L : K_i] = n_i$ and $K_i/K$ has genus $g_i$, $i = 1, 2$.
\end{itemize}
Then the genus $g_L$ of $L/K$ is bounded by
\[
g_L \leq n_1g_1 + n_2g_2 + (n_1 - 1)(n_2 - 1).
\]
\end{proposition}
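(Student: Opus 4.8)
The plan is to reinterpret the inequality geometrically and deduce it from the adjunction formula on a smooth surface. First I would pass to smooth projective models: let $C_1$, $C_2$, and $\tilde{C}$ be the smooth projective curves over $k$ with function fields $K_1$, $K_2$, and $L$, so that $C_i$ has genus $g_i$ and $\tilde{C}$ has genus $g_L$. (For the intersection-theoretic computations below I would first base-change to $\bar{k}$, which changes none of the genera since $k$ is the exact constant field.) The inclusions $K_i \hookrightarrow L$ induce finite morphisms $\phi_i \colon \tilde{C} \to C_i$ of degree $[L:K_i] = n_i$, and I would assemble these into a single morphism $\phi = (\phi_1, \phi_2) \colon \tilde{C} \to S := C_1 \times C_2$ into the smooth projective surface $S$. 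Let $C := \phi(\tilde{C})$ be the integral image curve. The hypothesis $L = K_1 K_2$ enters precisely here: the function field of $C$ is the subfield of $L$ generated by $K_1$ and $K_2$, namely $K_1 K_2 = L$, so $\phi$ factors as the normalization map $\tilde{C} \to C$ followed by the closed immersion $C \hookrightarrow S$. In particular $g_L$ equals the geometric genus of $C$, so it suffices to bound the arithmetic genus from above: $g_L \le p_a(C)$, the difference being a sum $\sum_P \delta_P \ge 0$ of the nonnegative $\delta$-invariants of the singular points of $C$.

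Next I would compute $p_a(C)$ by adjunction. Writing $f_1$ and $f_2$ for the numerical classes of the fibers $\{x\} \times C_2$ and $C_1 \times \{y\}$ (for points $x \in C_1$, $y \in C_2$), one has $f_1^2 = f_2^2 = 0$ and $f_1 \cdot f_2 = 1$. Intersecting $C$ with such a fiber counts the points of a generic fiber of the corresponding projection, so $C \cdot f_1 = \deg \phi_1 = n_1$ and $C \cdot f_2 = \deg \phi_2 = n_2$, whence $C \equiv n_2 f_1 + n_1 f_2$ numerically. Since $K_S \equiv (2g_1 - 2) f_1 + (2g_2 - 2) f_2$, the adjunction formula $2 p_a(C) - 2 = C \cdot (C + K_S)$, after expanding $C^2 = 2 n_1 n_2$ and $C \cdot K_S = n_1(2g_1 - 2) + n_2(2g_2 - 2)$, yields
\[
p_a(C) = n_1 g_1 + n_2 g_2 + (n_1 - 1)(n_2 - 1).
\]
Combining this with $g_L \le p_a(C)$ from the first step gives exactly the claimed bound.

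The main obstacle is arithmetic rather than conceptual: because we work in positive characteristic, the morphisms $\phi_i$ may be inseparable, so I would need to check that $\deg \phi_i$ (the product of the separable and inseparable degrees) still equals the field degree $n_i$ and that the fiber intersection numbers $C \cdot f_i$ compute these degrees \emph{schematically}, i.e.\ as lengths of generic fibers, rather than as naive point counts. I would also verify that the base change to $\bar{k}$ preserves the integrality of $C$ and the genera of $C_1$, $C_2$, $\tilde{C}$, which is guaranteed by $k$ being the full constant field, so that the intersection theory on the smooth surface $S_{\bar{k}}$ is legitimate. The inequality, as opposed to equality, issues entirely from the nonnegativity of the singularity contribution $\sum_P \delta_P$ under normalization. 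A self-contained, purely function-field-theoretic proof via the Riemann--Roch theorem is carried out in \cite{sti09}, the reference cited above; the geometric argument sketched here is simply the most transparent route to the sharp constant $(n_1 - 1)(n_2 - 1)$.
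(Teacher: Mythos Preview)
The paper does not give its own proof of this proposition; it is quoted without argument as Theorem~3.11.3 of \cite{sti09}, where Stichtenoth proves it by a purely function-field-theoretic Riemann--Roch computation. Your geometric approach via adjunction on $S = C_1 \times C_2$ is a classical and attractive alternative, but as written it contains a genuine gap.

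The problematic step is the assertion that $C \equiv n_2 f_1 + n_1 f_2$ numerically. This is false in general: the N\'eron--Severi group of $C_1 \times C_2$ (over $\bar{k}$) is $\mathbb{Z} f_1 \oplus \mathbb{Z} f_2 \oplus \operatorname{Hom}(J(C_1),J(C_2))$, and the correspondence part need not vanish. For a concrete counterexample take $C_1 = C_2 = E$ an elliptic curve and $C = \Delta$ the diagonal; then $n_1 = n_2 = 1$, but $\Delta^2 = 0 \ne 2 = 2n_1n_2$, and $p_a(\Delta) = 1$, not the value $2$ your formula predicts.

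The fix is standard and short. Set $D := C - n_2 f_1 - n_1 f_2$; then $D \cdot f_1 = D \cdot f_2 = 0$, so $D \cdot H = 0$ for the ample class $H = f_1 + f_2$, and the Hodge index theorem gives $D^2 \le 0$, hence $C^2 \le 2n_1n_2$. Since $K_S$ really is numerically $(2g_1-2)f_1 + (2g_2-2)f_2$, one also has $D \cdot K_S = 0$, and adjunction now yields the \emph{inequality}
\[
p_a(C) \le n_1 g_1 + n_2 g_2 + (n_1-1)(n_2-1)
\]
rather than equality. Combined with $g_L \le p_a(C)$ this gives the bound. With this correction your argument is complete; the inseparability and constant-field issues you flag are handled correctly.
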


\section{Maynard-Tao over $\bF_q(t)$}

We now briefly recall the Maynard-Tao method as adapted for the function field setting in \cite{chlopt}. Fix an integer $k \geq 2$, and let $\cH = \{h_1, \ldots, h_k\}$ be an admissible $k$-tuple of elements of $\bF_q[t]$ (that is, for each prime $p \in \bF_q[t]$, the set $\{h_i \pmod p : 1 \leq i \leq k\}$ is not a complete set of residues modulo $p$). Let $W = \prod_{|p| < \log\log\log(q^\ell)} p$. Define sums $S_1$ and $S_2$ as follows:
\[
S_1 = \sum_{\substack{n \in A(q^\ell) \\ n \equiv \beta \pmod W}} \omega(n)
\]
and
\[
S_2 = \sum_{\substack{n \in A(q^\ell) \\ n \equiv \beta \pmod W}} \left(\sum_{i = 1}^k \chi_{\bP}(n + h_i) \right) \omega(n),
\]
where $A(q^\ell)$ is the set of all monic polynomials in $\bF_q[t]$ of norm $q^\ell$ (i.e., degree $\ell$), $\bP$ is the set of monic irreducible elements of $\bF_q[t]$, $\beta \in \bF_q[t]$ is chosen so that $(\beta + h_i, W) = 1$ for all $1 \leq i \leq k$ (such a $\beta$ exists by the admissibility of $\cH$), and
\[
\omega(n) = \bigg( \sum_{\substack{d_1, \ldots, d_k \\ d_i \mid (n + h_i) \forall i}} \lambda_{d_1 \ldots, d_k} \bigg)^2
\]
for suitably chosen weights $\lambda_{d_1 \ldots, d_k}$. Suppose $S_2 > (m-1)S_1$, for some integer $m \geq 2$ and some choice of weights; then there exists $n_0 \in A(q^\ell)$ such that at least $m$ of the $n_0 + h_1, \ldots, n_0 + h_k$ are prime. The goal is to find a sequence of such $n_0 \in A(q^\ell)$ as $\ell \to \infty$. If this can be done, then infinitely often we obtain gaps between primes of size at most $\max_{1 \leq i, j \leq k : i \neq j} |h_i - h_j|$. 

For the choice of suitable weights and the subsequent asymptotic formulas for $S_1$ and $S_2$, we refer to Proposition 2.3 of \cite{chlopt}, which we restate here for convenience:

\begin{proposition}\label{asymps}
Let $0 < \theta < \frac{1}{4}$ be a real number and set $R = |A(q^\ell)|^{\theta}$. Let $F$ be a piecewise differentiable real-valued function supported on the simplex $\cR_k := \{(x_1, \ldots, x_k) \in [0, 1]^k : \sum_{i = 1}^k x_i \leq 1\}$, and let 
\[ F_{\max} := \sup_{(t_1, \ldots, t_k) \in [0, 1]^k} |F(t_1, \ldots, t_k)| + \sum_{i = 1}^k |\frac{\partial F}{\partial x_i} (t_1, \ldots, t_k)|.\]
Set
\[
\lambda_{d_1, \ldots, d_k} := \left(\prod_{i = 1}^k \mu(d_i)|d_i|\right) \sum_{\substack{r_1, \ldots, r_k \\ d_i \mid r_i \forall i \\ (r_i, W) = 1 \, \forall i}} \frac{\mu(r_1, \ldots, r_k)^2}{\prod_{i=1}^k \Phi(r_i)} F\left(\frac{\log|r_1|}{\log R}, \ldots, \frac{\log|r_k|}{\log R}\right)
\]
whenever $|d_1 \cdots d_k| < R$ and $(d_1 \cdots d_k, W) = 1$, and $\lambda_{d_1, \ldots, d_k} = 0$ otherwise. Then the following asymptotic formulas hold:
\[
S_1 = \frac{(1 + o(1)) \Phi(W)^k |A(q^\ell)| (\frac{1}{\log q} \log R)^{k}}{|W|^{k+1}} I_k(F)
\]
and
\[
S_2 = \frac{(1 + o(1)) \Phi(W)^k |A(q^\ell)| (\frac{1}{\log q} \log R)^{k+1}}{\big(\log|A(q^{\ell})|\big) |W|^{k+1}} \sum_{m = 1}^k J_k^{(m)}(F),
\]
where
\[
I_k(F) := \int \cdots \int_{\cR_k} F(x_1, \ldots, x_k)^2 dx_1 \ldots dx_k,
\]
and
\[
J_k^{(m)}(F) := \int \cdots \int_{[0,1]^{k-1}} \left(\int_0^1 F(x_1, \ldots, x_k) dx_m\right)^2 dx_1 \ldots dx_{m-1}dx_{m+1} \ldots dx_k.
\]
\end{proposition}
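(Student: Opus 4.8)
The plan is to carry out the function-field incarnation of Maynard's sieve computation, following the structure of the integer argument but exploiting the cleaner arithmetic of $\bF_q[t]$. For both $S_1$ and $S_2$ I would begin by expanding the square defining $\omega(n)$, so that each sum becomes a double sum over tuples $\mathbf{d} = (d_1,\ldots,d_k)$ and $\mathbf{e} = (e_1,\ldots,e_k)$ of the weights $\lambda_{\mathbf{d}}\lambda_{\mathbf{e}}$ times an inner count of admissible $n$. Since $\lambda_{\mathbf{d}}$ is supported on squarefree products coprime to $W$ and, after a standard reduction, on tuples whose moduli $d_i$ are pairwise coprime, the conditions $[d_i,e_i] \mid (n+h_i)$ together with $n \equiv \beta \pmod W$ amount by the Chinese Remainder Theorem to a single congruence modulo $W\prod_i [d_i,e_i]$.

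For $S_1$ the inner count is then exactly the number of monic polynomials of degree $\ell$ in a fixed residue class modulo $Q := W\prod_i[d_i,e_i]$. Here the function-field setting pays off at once: this count equals $q^\ell/|Q|$ with no error term whatsoever, provided $\deg Q \le \ell$, which holds since $\prod_i|[d_i,e_i]| < R^2 = q^{2\theta\ell}$ and $|W|$ is negligibly small. Thus $S_1 = (q^\ell/|W|)\sum_{\mathbf{d},\mathbf{e}} \lambda_{\mathbf{d}}\lambda_{\mathbf{e}}/\prod_i|[d_i,e_i]|$ exactly. I would then diagonalize this quadratic form by the Maynard substitution, introducing
\[
y_{\mathbf{r}} = \Big(\prod_{i=1}^k \mu(r_i)\Phi(r_i)\Big)\sum_{\substack{\mathbf{d}\\ r_i \mid d_i \, \forall i}} \frac{\lambda_{\mathbf{d}}}{\prod_i |d_i|},
\]
which converts the form into a near-diagonal sum $\sum_{\mathbf{r}} y_{\mathbf{r}}^2/\prod_i \Phi(r_i)$, the off-diagonal remainder being controlled by $F_{\max}$ and the support of $F$ on $\cR_k$. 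The definition of $\lambda_{\mathbf{d}}$ is engineered precisely so that $y_{\mathbf{r}} \approx F(\log|r_1|/\log R,\ldots,\log|r_k|/\log R)$ on squarefree $\mathbf{r}$ coprime to $W$, and the resulting sum over $\mathbf{r}$ is a Riemann sum for $I_k(F)$ against the density $\prod_i (\Phi(W)/|W|)$, yielding the stated formula.

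For $S_2$ I would treat each summand $\chi_{\bP}(n+h_i)$ separately. When $n+h_i$ is prime it has norm $q^\ell > R$, so any divisor $d_i$ with $|d_i| < R$ forces $d_i = e_i = 1$; the $i$-th sum thus collapses to a $(k-1)$-fold double sum, and the inner count becomes the number of $n$ for which $n+h_i$ is prime in a progression modulo $W\prod_{j\ne i}[d_j,e_j]$. Here I would invoke the prime counting estimate for $\bF_q[t]$ in arithmetic progressions, namely the effective Chebotarev statement of Theorem \ref{chebotarev} specialized to cyclotomic extensions, giving a main term $\frac{1}{\Phi(Q)}\frac{q^\ell}{\ell}$ with a Weil-type error of size $O(q^{\ell/2}\deg Q/\ell)$. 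Performing the same diagonalization in the remaining $k-1$ variables produces the main term proportional to $J_k^{(i)}(F)$, and summing over $1 \le i \le k$ gives $\sum_{m=1}^k J_k^{(m)}(F)$; the bookkeeping with $\log|A(q^\ell)| = \ell \log q$ then accounts for the extra factor $\frac{1}{\log q}\log R$ and for the $1/\log|A(q^\ell)|$.

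The main obstacle is the error-term analysis for $S_2$: one must sum the individual Weil-type errors $O(q^{\ell/2}\deg Q/\ell)$ over all moduli $Q = W\prod_{j\ne i}[d_j,e_j]$ with $\deg Q$ up to roughly $2\theta\ell$ and show the total is $o$ of the main term. This is exactly where the hypothesis $\theta < \frac{1}{4}$ enters, through the requirement $q^{\ell/2}R^2 = q^{(1/2+2\theta)\ell} = o(q^\ell)$, which leaves enough room against the factor $q^{\ell/2}$. In the integer case this step demands Bombieri--Vinogradov, whereas over $\bF_q[t]$ the Riemann hypothesis for curves makes each individual error strong enough that direct summation suffices, which is the chief simplification the function field affords. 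The secondary technical point is verifying that the off-diagonal contribution in the diagonalization, and the discrepancy between $y_{\mathbf{r}}$ and the sampled value of $F$, are both of lower order; these are routine given the piecewise-differentiability hypothesis and the bound $F_{\max}$, but they must be tracked carefully to confirm the $(1+o(1))$ in each formula.
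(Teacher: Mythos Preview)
Your sketch is correct and follows the standard Maynard--Tao diagonalization adapted to $\bF_q[t]$, exactly as one would expect; note however that the present paper does not itself prove this proposition but simply quotes it verbatim from \cite{chlopt} (Proposition 2.3 there), so there is no in-paper proof to compare against. Your outline matches the structure of the argument in \cite{chlopt}, including the key simplification that over $\bF_q[t]$ the Weil bounds give a pointwise error $O(q^{\ell/2})$ strong enough to be summed directly over all moduli of norm up to $R^2$ when $\theta<1/4$, obviating the need for a Bombieri--Vinogradov averaging step.
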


By the above proposition, as $\ell \to \infty$, $S_2/S_1 \to \theta \frac{\sum_{m = 1}^k J_k^{(m)}(F)}{I_k(F)}$. Set
\[
M_k := \sup_F \frac{\sum_{m = 1}^k J_k^{(m)}(F)}{I_k(F)},
\]
where the supremum is taken over all $F$ satisfying the conditions of the Proposition \ref{asymps}. Following Proposition 4.13 of \cite{may15}, we have $M_k > \log k - 2\log\log k - 2$ for all large enough $k$. In particular, $M_k \to \infty$, so upon choosing $k$ large enough depending on $m$ (and choosing $F$ and $\theta$ appropriately), we obtain the desired result for any admissible $k$-tuple $\cH$.

For the present article, we fix $g$ satisfying the conditions of Theorem \ref{main} and modify the above argument as necessary; our modifications are somewhat similar to those in \cite{MR3272281}. Given an admissible $k$-tuple $\cH = \{h_1, \ldots, h_k\}$, the set $g\cH = \{gh_1, \ldots, gh_k\}$ is again admissible. We work from now on with admissible $k$-tuples $\cH$ such that every element of $\cH$ is divisible by $g$. Set
\[
W := \lcm\left(g, \prod_{|p| < \log_3(q^\ell)} p \right).
\]
With $A(q^\ell)$ defined as above, we will insist that $\ell$ is prime; this will be advantageous in what follows. We again search among $n \in A(q^{\ell})$ belonging to a certain residue class modulo $W$, but we must choose this residue class more carefully than in the original Maynard-Tao argument; that is, we choose this residue class so that primes detected by the sieve will have $g$ as a primitive root.

\begin{lemma}\label{residueclass}
We can choose $\alpha \in \bF_q[t]$ such that, for any $1 \leq i \leq k$ and for any $n \equiv \alpha \pmod W$ with $\deg(n)$ odd,
\begin{itemize}
	\item $n + h_i$ is coprime to $W$, and
	\item $\left(\frac{g}{n + h_i}\right)_{q - 1}$ generates $\bF_q^{\ast}$.
\end{itemize}
\end{lemma}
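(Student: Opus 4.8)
The plan is to apply the reciprocity law of Theorem~\ref{reciprocitylaw} to move the condition onto $\alpha$ modulo $g$, and then to exploit the hypothesis on $g$ via Proposition~\ref{properties}(b). Since $g \mid h_i$ for every $i$ and $g \mid W$, any $n \equiv \alpha \pmod{W}$ satisfies $n + h_i \equiv \alpha \pmod g$. Once I choose $\alpha$ coprime to $g$, each $n + h_i$ is coprime to $g$ and is monic of degree $\deg n > \deg h_i$, so Theorem~\ref{reciprocitylaw} applies with $a = g$, $b = n + h_i$, and $d = q-1$. Here $\tfrac{q-1}{d} = 1$ and $\deg(n+h_i) = \deg n$ is odd, so the reciprocity sign $(-1)^{\deg g \cdot \deg(n+h_i)}$ collapses to the fixed element $\varepsilon := (-1)^{\deg g} \in \{\pm 1\} \subseteq \bF_q^\ast$, the same for all $i$ and all admissible $n$; this is exactly why the hypothesis that $\deg n$ is odd is imposed. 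Combined with Proposition~\ref{properties}(a), this yields
\[
\left(\frac{g}{n + h_i}\right)_{q-1} = \varepsilon\left(\frac{\alpha}{g}\right)_{q-1},
\]
a single value independent of $i$ and of the particular $n$. It therefore suffices to choose $\alpha \bmod g$, coprime to $g$, so that $\varepsilon\left(\frac{\alpha}{g}\right)_{q-1}$ generates $\bF_q^\ast$.

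The key step is to prove that $\alpha \mapsto \left(\frac{\alpha}{g}\right)_{q-1}$ maps $(\bF_q[t]/g\bF_q[t])^\ast$ onto $\bF_q^\ast$. Writing $g = P_1^{e_1}\cdots P_s^{e_s}$, we have $\left(\frac{\alpha}{g}\right)_{q-1} = \prod_{j=1}^s \left(\frac{\alpha}{P_j}\right)_{q-1}^{e_j}$. By Proposition~\ref{properties}(b) each factor $\left(\frac{\alpha}{P_j}\right)_{q-1}$ runs over all of $\bF_q^\ast$ as $\alpha \bmod P_j$ varies over nonzero residues, and by the Chinese Remainder Theorem these values may be chosen independently across the distinct $P_j$. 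Fixing a generator $\gamma$ of $\bF_q^\ast$ and writing each factor as a power of $\gamma$, the attainable values of $\left(\frac{\alpha}{g}\right)_{q-1}$ form exactly the subgroup $(\bF_q^\ast)^{d}$ with $d = \gcd(e_1, \ldots, e_s, q-1)$. The hypothesis that $g$ is not a $v$th power for any prime $v \mid q - 1$ says precisely that no such $v$ divides $\gcd(e_1, \ldots, e_s)$, i.e.\ $d = 1$; hence the map is surjective, and I may select $\alpha \bmod g$ (automatically coprime to $g$) with $\left(\frac{\alpha}{g}\right)_{q-1} = \varepsilon^{-1}\gamma$, so that $\varepsilon\left(\frac{\alpha}{g}\right)_{q-1} = \gamma$ generates $\bF_q^\ast$.

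Finally I would fix $\alpha$ modulo the remaining prime factors of $W$. Since $W = g\cdot\prod_{|p| < \log_3(q^\ell),\, p\nmid g} p$, the two factors are coprime, so the Chinese Remainder Theorem lets me combine the choice of $\alpha \bmod g$ above with independent choices of $\alpha \bmod p$ for the small primes $p \nmid g$. These $p$ divide no $h_i$, and admissibility of $\cH$ ensures that $\{-h_i \bmod p : 1 \le i \le k\}$ misses some residue class, so I can take $\alpha \not\equiv -h_i \pmod p$ for all $i$, forcing $n + h_i$ coprime to $p$; coprimality to the primes dividing $g$ is already guaranteed. Assembling these congruences gives a single $\alpha \bmod W$ with both required properties. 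I expect the surjectivity argument of the second paragraph to be the main obstacle, as it is the only place the non-$v$th-power hypothesis on $g$ enters in an essential way; the reciprocity reduction and the admissibility bookkeeping are routine by comparison, the one subtlety being the need for $\deg n$ odd to keep the reciprocity sign constant.
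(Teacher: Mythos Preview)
Your proof is correct and follows essentially the same strategy as the paper's: reduce via reciprocity to controlling $(\alpha/g)_{q-1}$, use the non-$v$th-power hypothesis on $g$ to hit a generator of $\bF_q^\ast$, and then use admissibility and CRT to handle the remaining prime factors of $W$. Your treatment of the reciprocity sign via a single $\varepsilon = (-1)^{\deg g}$ is a clean unification of the paper's two cases ($\deg g$ even versus odd), and your phrasing of the key step as ``the image of $\alpha \mapsto (\alpha/g)_{q-1}$ is $(\bF_q^\ast)^d$ with $d = \gcd(e_1,\dots,e_s,q-1) = 1$'' is an equivalent, slightly more conceptual version of the paper's explicit B\'ezout computation.
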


\begin{proof}
Fix a generator $\omega \in \bF_q^{\ast}$. Suppose $\deg(g)$ is even. Write $g = p_1^{f_1} \cdots p_r^{f_r}$ with $p_i$ irreducible for each $i$. Since $g$ is not an $v$th power for any $v \mid q-1$, the numbers $f_1, \ldots, f_r, q - 1$ have greatest common divisor equal to one. Hence, we may write
\[
1 = b_1f_1 + \ldots + b_rf_r + b_{r+1}(q-1)
\]
for some integers $b_i$ not all zero. Thus
\[
\omega = \omega^{b_1f_1 + \ldots + b_rf_r + b_{r+1}(q-1)} = \omega^{b_1f_1 + \ldots + b_rf_r}.
\]
Now, for each $1 \leq i \leq r$, $\omega^{b_i}$ is an element of $\bF_q^{\ast}$ of order dividing $q-1$. By Proposition \ref{properties}b, for each such $i$ there exists $a_i \in \bF_q[t]$ with $(a_i/p_i)_{q - 1} = \omega^{b_i}$; and by the Chinese remainder theorem, we can replace each $a_i$ in the system of congruences above by a single element $a \in \bF_q[t]$. So, by definition,
\[
\left(\frac{a}{g}\right)_{q - 1} = \prod_{i = 1}^r \left(\frac{a_i}{p_i}\right)^{f_i}_{q - 1} = \prod_{i = 1}^r \omega^{b_if_i} = \omega.
\]
(note that all polynomials here are monic). Choose $\alpha$ so that $\alpha \equiv a \pmod g$ and $(\alpha + h_i, W/g) = 1$ for all $h_i \in \cH$; such an $\alpha$ can be chosen by the admissibility of $\cH$. Then by Proposition \ref{properties}a, for all $n \equiv \alpha \pmod W$, we have
\[
\left(\frac{a}{g}\right)_{q - 1} = \left(\frac{\alpha + h_i}{g}\right)_{q - 1} = \left(\frac{n + h_i}{g}\right)_{q - 1},
\]
recalling that all $h_i \in \cH$ are divisible by $g$. According to Theorem \ref{reciprocitylaw},
\[
\left(\frac{n + h_i}{g}\right)_{q - 1} = (-1)^{\deg(n + h_i)\deg(g)}\left(\frac{g}{n + h_i}\right)_{q - 1} = \left(\frac{g}{n + h_i}\right)_{q - 1},
\]
so that $(\frac{g}{n + h_i})_{q - 1}$ generates $\bF_q^\ast$ as desired. If $\deg(g)$ is odd, so that the factor of $-1$ remains on the right-hand side of the above equation, repeat the argument with $-\omega$ in place of $\omega$.
\end{proof}

Let $\alpha \in \bF_q[t]$ be suitably chosen according to Lemma \ref{residueclass}. Define 
\[
\tilde{S}_1 := S_1
\]
and
\[
\tilde{S}_2 := \sum_{\substack{n \in A(q^\ell) \\ n \equiv \alpha \pmod W}} \left(\sum_{i = 1}^k \chi_{\bP_g}(n + h_i) \right) \omega(n).
\]
(So $\tilde{S}_2$ is just $S_2$ with $\bP$ replaced with $\bP_g$.) Our theorem follows immediately from the following proposition.

\begin{proposition}\label{sameasymps}
We have the same asymptotic formulas for $\tilde{S}_1$ and $\tilde{S}_2$ as we do for $S_1$ and $S_2$ in Proposition \ref{asymps}.
\end{proposition}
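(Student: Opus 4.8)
The plan is as follows. Since $\tilde{S}_1 = S_1$ by definition, only $\tilde{S}_2$ requires work, and the object is to show that replacing $\bP$ by $\bP_g$ (and $\beta$ by $\alpha$) leaves the main term of Proposition \ref{asymps} unchanged. Throughout I restrict to $\ell$ an odd prime, taken large enough that $\deg h_i < \ell$ for every $i$ and large enough to avoid the finitely many exceptional values described in Step 3; then each $n \in A(q^\ell)$ with $n \equiv \alpha \pmod W$ has $\deg(n+h_i) = \ell$, and by Lemma \ref{residueclass} satisfies that $(g/(n+h_i))_{q-1}$ generates $\bF_q^\ast$. \emph{Step 1 (detecting primitive roots).} For a prime $P$ of degree $\ell$ the group $(\bF_q[t]/P)^\ast$ is cyclic of order $q^\ell-1$, so $g$ is a primitive root modulo $P$ if and only if $g$ is not an $r$-th power residue for any prime $r \mid q^\ell - 1$. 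The condition that $(g/P)_{q-1}$ generate $\bF_q^\ast$ forces this for every $r \mid q-1$, so for $P = n + h_i$ in our residue class it remains only to impose the non-$r$-th-power condition for primes $r \mid q^\ell-1$ with $r \nmid q-1$. Because $\ell$ is prime, each such $r$ satisfies $\ord_r(q) = \ell$, whence $r \equiv 1 \pmod \ell$ (in particular $r > \ell$) and the $r$-th roots of unity lie in, and generate, the constant extension $\bF_{q^\ell}$. Inclusion–exclusion then gives, for these $P$,
\[
\chi_{\bP_g}(P) = \chi_{\bP}(P)\sum_{d} \mu(d)\,\mathbf{1}\big[g \text{ is a } d\text{-th power residue} \bmod P\big],
\]
the sum running over squarefree $d$ all of whose prime factors divide $q^\ell-1$ but not $q-1$.

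Substituting this into $\tilde{S}_2$ and expanding $\omega(n)$ gives $\tilde{S}_2 = \sum_d \mu(d)\,T_d$, where $T_d$ is the weighted count of primes $P = n + h_i$ of degree $\ell$, lying in the progressions forced by $W$ and by the sieve divisors, for which $g$ is a $d$-th power residue. The term $d = 1$ is precisely the Maynard--Tao sum $S_2$ attached to the residue class $\alpha$; since the derivation of Proposition \ref{asymps} depends only on the coprimality $(\alpha + h_i, W) = 1$ (which $\alpha$ enjoys), this term supplies exactly the asserted main term. It therefore remains to prove that $\sum_{d > 1}\mu(d)\,T_d = o(\text{main term})$.

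\emph{Step 3 (Chebotarev and the main terms for $d>1$).} For each $d$ the conditions defining $T_d$ amount to a splitting condition for $P$ in the compositum $L_d$ of the Carlitz cyclotomic extension $K(\Lambda_M)$ (encoding the congruences modulo $W$ and the sieve moduli, of combined modulus $M$), the constant extension $\bF_{q^\ell}(t)$ (supplying the $r$-th roots of unity), and the Kummer extension $K(g^{1/d})$. Applying Theorem \ref{chebotarev}, the main term of $T_d$ carries a density factor $1/d$ from the Kummer degree; here I use that for all sufficiently large prime $\ell$ no admissible $r$ divides the greatest common divisor of the exponents in the factorization of $g$, so that $g$ is genuinely not an $r$-th power and the Kummer extension has full degree $d$. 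Because the least $d>1$ exceeds $\ell$, one has $\sum_{d>1} 1/d \ll \log q / \log \ell \to 0$, so the accumulated main terms for $d>1$ are $o(\text{main term})$. The genera $g_{L_d}$ needed for the error analysis are bounded by combining Propositions \ref{kummergenus}, \ref{cyclotomicgenus}, and \ref{castelnuovo}.

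\emph{Step 4 (the obstacle: error terms).} The delicate point is controlling $\sum_{d>1}$ of the Chebotarev error terms. Since $g_{L_d}$ grows roughly linearly in $d$, the bound in Theorem \ref{chebotarev} degrades as $d$ increases, and summed naively over all $d \mid q^\ell - 1$ it would swamp the main term. I expect to resolve this by a Hooley-type split: for $d$ below a threshold $D$ (a small power of $q^\ell$, balanced against the $q^{\ell/2}$ square-root saving that Weil's Riemann Hypothesis provides inside Theorem \ref{chebotarev}), the genus bounds make each error power-saving and the sum over $d \le D$ is negligible; for $d > D$ I abandon Chebotarev and use the elementary observation that $g$ being a $d$-th power residue modulo $P$ forces $P \mid g^{(q^\ell-1)/d} - 1$, so the number of such degree-$\ell$ primes is at most $(\deg g)(q^\ell-1)/(d\ell)$, which with a crude bound on the weights $\lambda_{d_1,\dots,d_k}$ makes the large-$d$ contribution negligible. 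Choosing $D$ to balance the two regimes, and checking that this truncation is compatible with the sieve weights, is the main technical hurdle; the restriction to prime $\ell$ is exactly what keeps every relevant $r$ large and the constant field uniformly equal to $\bF_{q^\ell}$, which is essential to both halves of the split.
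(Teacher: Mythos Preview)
Your outline is on the right track but introduces an unnecessary complication that then becomes the ``main technical hurdle'' you flag in Step~4. The paper avoids this hurdle entirely, and the key observation you are missing is simple: one does not need inclusion--exclusion over all squarefree $d$, only a \emph{union bound} over primes $r \mid q^\ell - 1$. Since $S_2^{(m)} - \tilde{S}_2^{(m)} \geq 0$, it suffices to bound it above, and
\[
\chi_{\bP}(P) - \chi_{\bP_g}(P) \;\leq\; \sum_{\substack{r \mid q^\ell - 1 \\ r \text{ prime}}} \chi_{\cP_r}(P)
\]
(where $\cP_r$ is the set of degree-$\ell$ primes modulo which $g$ is an $r$th power) already gives everything. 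Your Step~1 shows the $r \mid q-1$ terms vanish by the choice of $\alpha$. For each remaining prime $r$, your Chebotarev argument (Step~3, with $d=r$) produces a main term with factor $1/r$ and, after the genus bounds via Propositions~\ref{kummergenus}--\ref{castelnuovo}, an error $O(q^{\ell/2})$. Now there are only $\omega(q^\ell-1) \ll \ell/\log\ell$ such primes $r$, so the total Chebotarev error, multiplied by the crude weight bound $\lambda_{\max}^2\big(\sum_{|s|<R}\tau_k(s)\big)^2 \ll R^2(\log R)^{2k}$, is $\ll \ell \cdot q^{\ell/2} R^2 (\log R)^{2k} = o(q^\ell)$ since $\theta<1/4$. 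No split at a threshold $D$, no Hooley-type large-$d$ argument, and no compatibility check with the sieve weights is needed.

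By contrast, your full inclusion--exclusion over composite $d$ forces you to control up to $2^{\omega(q^\ell-1)}$ terms, and the elementary large-$d$ bound you sketch (that $P \mid g^{(q^\ell-1)/d}-1$) counts primes $P$ but does not obviously interact well with the weights $\omega(n)$, which depend on \emph{all} of $n+h_1,\dots,n+h_k$ and not just $n+h_m$; you would need something like a pointwise bound $\omega(n) \ll \lambda_{\max}^2\,\tau_k(\cdot)^2$ together with a divisor-sum estimate over the relevant $n$, and it is not clear this beats the main term. This may be salvageable, but it is extra work created by an unnecessarily precise decomposition. Replace your inclusion--exclusion by the one-line inequality above and Step~4 disappears.
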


If we can establish Proposition \ref{sameasymps}, Maynard's argument to establish the existence of bounded rational prime gaps can be used to obtain Theorem \ref{main}.

\section{Proof of Proposition \ref{sameasymps}}

This proof follows essentially the same strategy as Section 3.3 of \cite{MR3272281}. Since $\tilde{S}_1 = S_1$, we need only concern ourselves with $\tilde{S}_2$. We can write $\tilde{S}_2 = \sum_{m = 1}^k \tilde{S}_2^{(m)}$, where
\[
\tilde{S}_2^{(m)} := \sum_{\substack{n \in A(q^\ell) \\ n \equiv \alpha \pmod W}} \chi_{\bP_g}(n + h_m) \omega(n).
\]
The proof of Proposition \ref{asymps} (which refers to Maynard's analysis) shows that, for any $m$,
\[
S_2^{(m)} \sim \frac{\varphi(W)^k|A(q^\ell)|(\frac{1}{\log q}\log R)^{k+1}}{|W|^{k+1}\log q^\ell}\cdot J_k^{(m)}(F).
\]
To establish Proposition \ref{sameasymps}, it would certainly suffice to prove that the difference between $S_2^{(m)}$ and $\tilde{S}_2^{(m)}$ is asymptotically negligible, i.e., that as $\ell \to \infty$ through prime values,
\begin{align}\label{difference}
S_2^{(m)} - \tilde{S}_2^{(m)} = o\left(\frac{\varphi(W)^k|A(q^\ell)|(\log q^\ell)^k}{|W|^{k+1}}\right).
\end{align}

We now focus on establishing (\ref{difference}) for each fixed $m$.

For prime $r$ dividing $q^\ell - 1$, let $\cP_{r}$ denote the set of all irreducible polynomials $p \in A(q^\ell)$ satisfying
\[
g^{\frac{q^{\ell} - 1}{r}} \equiv 1 \pmod p.
\]
We have the inequality
\[
0 \leq \chi_{\bP} - \chi_{\bP_g} \leq \sum_{r \mid q^{\ell} - 1} \chi_{\cP_r}
\]
for any argument which is not an irreducible polynomial dividing $g$, and it follows that
\begin{align}\label{doublesum}
0 \leq S_2^{(m)} - \tilde{S}_2^{(m)} \leq \sum_{r \mid q^{\ell} - 1} \sum_{\substack{n \in A(q^\ell) \\ n \equiv \alpha \pmod W}} \chi_{\cP_r}(n + h_m) \omega(n).
\end{align}
We will show that this double sum satisfies the asymptotic estimate in ($\ref{difference}$).

First note that primes $r$ dividing $q - 1$ make no contribution to the sum. Indeed, suppose $r \mid q-1$ and $p := n + h_m$ is detected by the sum. Then
\[
1 \equiv g^{\frac{q^{\ell} - 1}{r}} \equiv \left(\frac{g}{p}\right)_r = \left(\frac{g}{p}\right)_{q-1}^{\frac{q-1}{r}}.
\]
So $(g/p)_{q-1}$ does not generate $\bF_q^{\ast}$, and this contradicts the choice of the residue class $\alpha \pmod W$.

Upon expanding the weights and reversing the order of summation, the right-hand side of (\ref{doublesum}) becomes
\begin{align}\label{sum}
\sum_{\substack{r \mid q^\ell - 1 \\ r \nmid q - 1}} \sum_{\substack{d_1, \ldots, d_k \\ e_1, \ldots, e_k}} \lambda_{d_1 \ldots, d_k} \lambda_{e_1 \ldots, e_k} \sum_{\substack{n \in A(q^\ell) \\ n \equiv \alpha \pmod W \\ [d_i, e_i] \mid n + h_i \forall i}} \chi_{\cP_r}(n + h_m).
\end{align}
By definition of the $\lambda$ terms, the $\{d_i\}$ and $\{e_i\}$ that contribute to the sum are precisely those such that $W, [d_1, e_1], \ldots, [d_k, e_k]$ are pairwise coprime. Thus, the inner sum can be written as a sum over a single residue class modulo $M := W\prod_{i = 1}^k [d_i, e_i]$. We will also require that $n + h_m$ is coprime to $M$ (otherwise, it will not contribute to the inner sum), which occurs when $d_m = e_m = 1$.

With this in mind, we claim
\begin{align}\label{chevapp}
\sum_{\substack{n \in A(q^\ell) \\ n \equiv \alpha \pmod W \\ [d_i, e_i] \mid n + h_i \forall i}} \chi_{\cP_r}(n + h_m) = \frac{1}{r \Phi(M)} \frac{q^\ell}{\ell} + O(q^{\ell/2}).
\end{align}

Indeed, suppose $p := n + h_m$ is detected by $\chi_{\cP_r}$. Then $p$ belongs to a certain residue class modulo $M$, and $g$ is an $r$th power modulo $p$. Write $K = \bF_q(t)$. The former condition forces $\Frob_p$ to be a certain element of $\Gal(K(\Lambda_M)/K)$, and the latter condition is equivalent to $p$ splitting completely in the field $K(\zeta_r, \sqrt[r]{g})$, where $\zeta_r$ is a primitive $r$th root of unity. Let $L := K(\zeta_r, \Lambda_M, \sqrt[r]{g})$. If $K(\Lambda_M)/K$ and $K(\zeta_r, \sqrt[r]{g})/K$ are linearly disjoint extensions of $K$, then the above conditions on $p$ amount to placing $\Frob_p$ in a uniquely determined conjugacy class $\cC$ of size 1 in $\Gal(L/K)$. 

To see that $K(\Lambda_M)/K$ and $K(\zeta_r, \sqrt[r]{g})/K$ are linearly disjoint extensions of $K$, first note that since $\ell$ is prime, our conditions on $r$ imply that the order of $q$ modulo $r$ is equal to $\ell$. In particular, this means $r > \ell$. Then since $g$ is fixed while $\ell$ (and thus $r$) can be taken arbitrarily large, we can say that $g$ is not an $r$th power in $K$.

The extension $K(\sqrt[r]{g})/K$ is not Galois, since the roots of the minimal polynomial $t^r - g$ of $\sqrt[r]{g}$ are $\{\zeta_r^s \sqrt[r]{g}\}_{s = 1}^r$, where $\zeta_r$ is a primitive $r$th root of unity. If all of these roots are elements of $K$, then $K$ must contain all $r$th roots of unity, implying that $r \mid q - 1$, contradicting the conditions on the sum over values of $r$ above. Thus $K(\sqrt[r]{g}) \not\subset K(\Lambda_M)$, as $K(\Lambda_M)$ is an abelian extension of $K$, and hence any subfield, corresponding to a (normal) subgroup of $\Gal(K(\Lambda_M)/K)$, is Galois. By a theorem of Capelli on irreducible binomials,
\[
[K(\sqrt[r]{g}, \Lambda_M) : K] = [K(\sqrt[r]{g}, \Lambda_M) : K(\Lambda_M)][K(\Lambda_M) : K] = r\Phi(M).
\]
So we see that $K(\sqrt[r]{g})$ and $K(\Lambda_M)$ are linearly disjoint extensions of $K$.

For what follows, we need that $K(\sqrt[r]{g}, \Lambda_M)/K$ is a geometric extension of $K$ (i.e., that $\bF_q$ is the full constant field of $K(\sqrt[r]{g}, \Lambda_M)$). By Corollary 12.3.7 of \cite{sal07}, $K(\Lambda_M)/K$ is a geometric extension of $K$, so it is enough to show that the extension $K(\sqrt[r]{g}, \Lambda_M)/K(\Lambda_M)$ is also geometric. This follows from Proposition 3.6.6 of \cite{sti09}, provided we have that $t^r - g$ is irreducible in $K\bar{\bF}_{q}(\Lambda_M)$. The previous paragraph shows that $g$ is not an $r$th power in $K(\Lambda_M)$, so Capelli's theorem tells us $t^r - g$ is irreducible in $K(\Lambda_M)$. Now, $K\bar{\bF}_{q}(\Lambda_M)$ is a constant field extension of $K(\Lambda_M)$, the compositum of $K(\Lambda_M)$ and $\bF_{q^b}$, say. Thus, $K\bar{\bF}_{q}(\Lambda_M)/K$ is an abelian extension of $K$, as it is the compositum of two abelian extensions of $K$. If $t^r - g$ factors in this extension, then once again by Capelli, $K\bar{\bF}_{q}(\Lambda_M)/K$ must contain an $r$th root of $g$; but this is impossible, by the argument of the previous paragraph. This establishes the claim.
%

Let $K'$ denote the constant field extension $K(\zeta_r)$ of $K$; then according to Proposition 3.6.1 of \cite{sti09}, we have $[K'(\Lambda_M, \sqrt[r]{g}) : K'] = r\Phi(M)$, and hence
\[
[L:K] = [L : K'][K' : K] = [K'(\Lambda_M, \sqrt[r]{g}) : K'][K' : K] = r\Phi(M)\ell,
\]
using Proposition 10.2 of \cite{ros02} to determine $[K' : K] = \ord_q(r) = \ell$ (here $\ord_q(r)$ denotes the multiplicative order of $q$ modulo $r$). Thus $K(\zeta_r, \sqrt[r]{g})$ and $K(\Lambda_M)$ are linearly disjoint Galois extensions of $K$ with compositum $L$, as desired.
%

We are nearly in a position to use Theorem \ref{chebotarev} to estimate the sum in (\ref{chevapp}). If $\tau \in \cC$, the map $\tau$ fixes $K(\zeta_r, \sqrt[r]{g})/K$, and in particular restricts to the identity map on $\bF_{q^{\ell}}$, the constant field of $K(\zeta_r, \sqrt[r]{g})$. Now for any $a \in \bF_{q^{\ell}}$, we have
\[
\Frob_q^\ell(a) = a^{q^\ell} = a(a^{q^{\ell} - 1}) = a,
\]
and so the restriction condition of Theorem \ref{chebotarev} is satisfied. The sum in question is therefore equal to
\begin{align}\label{chebfirstapp}
\frac{1}{r\Phi(M)}\frac{q^\ell}{\ell} + O\Big(\frac{1}{r\Phi(M)}\frac{q^{\ell/2}}{\ell}\big(r\Phi(M) + g_L\big)\Big).
\end{align}

Let $g_1$ and $g_2$ denote the genus of $K'(\sqrt[r]{g})/K'$ and $K'(\Lambda_M)/K'$, respectively. 
By Proposition \ref{castelnuovo},
\[
g_L \leq \Phi(M)g_1 + rg_2 + (\Phi(M) - 1)(r-1).
\]
Recalling that $K(\sqrt[r]{g})/K$ is a geometric extension, it follows from Proposition \ref{kummergenus} that $g_1 \ll r$, with the implied constant depending on $g$. For $g_2$, we refer to Proposition \ref{cyclotomicgenus}, which states that
\[
2g_2 - 2 = -2\Phi(M) + \sum_{i = 1}^v d_is_i\frac{\Phi(M)}{\Phi(P_i^{\alpha_i})} + (q-2)\frac{\Phi(M)}{q-1},
\]
where $M = \prod_{i = 1}^v P_i^{\alpha_i}$ (with the $P_i$ distinct irreducible polynomials), $d_i = \deg P_i$, and $s_i = \alpha_i\Phi(P_i^{\alpha_i}) - q^{d_i(\alpha_i - 1)}$. At any rate, the middle sum is
\begin{align*}
\leq \Phi(M) \sum_{i = 1}^v d_i\alpha_i = \Phi(M) \sum_{i = 1}^v \alpha_i\deg(P_i) = \Phi(M)\deg(M).
\end{align*}
The first and third terms are clearly $O(\Phi(M))$, and thus $g_L \ll r\Phi(M)\log|M|$. Inserting this estimate into (\ref{chebfirstapp}), we obtain that the number of primes $p$ detected by the sum in (\ref{chevapp}) is
\begin{align}\label{halfchevapp}
\frac{1}{r\Phi(M)}\frac{q^\ell}{\ell} + O\Big(\frac{1}{r\Phi(M)}\frac{q^{\ell/2}}{\ell}\big(r\Phi(M) + r\Phi(M)\log|M|\big)\Big).
\end{align}
Recall that $M = W\prod_{i = 1}^k [d_i, e_i]$. Owing to the support of the weights $\lambda$, we have $|\prod [d_i, e_i]| < R^2$, and hence
\begin{align*}
\log|M| &= \log\Big(|W| \prod_{i = 1}^k |[d_i, e_i]|\Big) = \log|W| + \log(R^2) \\
	&\ll \log|W| + \log(q^{2\theta\ell}) \ll \ell,
\end{align*}
recalling that $W = \prod_{|p| < \log\log\log(q^\ell)} p$. Therefore the error term in (\ref{halfchevapp}) is $O(q^{\ell/2})$, as claimed.
 
Inserting the above into (\ref{sum}), we produce an $O$-term of size 
\begin{align*}
\ll q^{\ell/2}\bigg(\sum_{r | q^{\ell} - 1} 1\bigg)&\bigg(\sum_{\substack{d_1, \ldots, d_k \\ e_1, \ldots, e_k}} |\lambda_{d_1 \ldots, d_k}| |\lambda_{e_1 \ldots, e_k}|\bigg) \\ &\ll q^{\ell/2}\log(q^\ell - 1) \lambda_{\max}^2 \bigg(\sum_{s\, :\, |s| < R} \tau_k(s)\bigg)^2 \\
 &\ll q^{\ell/2} \cdot \ell \cdot R^2(\log R)^{2k},
\end{align*}
and this is $o(q^\ell)$ since $R = q^{\theta\ell}$ where $0 < \theta < 1/4$.

We now focus on the main term:
\begin{align}\label{mainterm}
\bigg(\sum_{\substack{r \mid q^{\ell} - 1 \\ r \nmid q-1}} \frac{1}{r}\bigg) \frac{q^\ell}{\ell\Phi(W)} \sideset{}{'}\sum_{\substack{d_1, \ldots, d_k \\ e_1, \ldots, e_k}} \frac{\lambda_{d_1 \ldots, d_k} \lambda_{e_1 \ldots, e_k}}{\prod_{i = 1}^k \Phi([d_i, e_i])},
\end{align}
where the $'$ on the sum means that $[d_1, e_1], \ldots, [d_k, e_k],$ and $W$ are all pairwise coprime. Recalling the support of the weights $\lambda$, this is equivalent to requiring that $(d_i, e_j) = 1$ for all $1 \leq i, j \leq k$ with $i \neq j$. We account for this by inserting the quantity $\sum_{s_{i, j} \mid d_i, e_j} \mu(s_{i, j})$, which is 1 precisely when $(d_i, e_j) = 1$ and is 0 otherwise. Define a completely multiplicative function $g$ such that $g(p) = |p| - 2$ on prime polynomials $p$; note that
\[
\frac{1}{\Phi([d_i, e_i])} = \frac{1}{\Phi(d_i)\Phi(e_i)}\sum_{u_i \mid d_i, e_i} g(u_i).
\]
Therefore, the primed sum above is equal to
\begin{align}\label{dpsum}
\sum_{u_1, \ldots, u_k}\Big(\prod_{i = 1}^k g(u_i)\Big) \sideset{}{''}\sum_{s_{1, 2}, \ldots, s_{k-1, k}} \Big(\prod_{\substack{1 \leq i, j \leq k \\ i \neq j}} \mu(s_{i, j})\Big) \sum_{\substack{d_1, \ldots, d_k \\ e_1, \ldots e_k \\ u_i \mid d_i, e_i \forall i \\ s_{i, j} \mid d_i, e_j \forall i \neq j \\ d_m = e_m = 1}} \frac{\lambda_{d_1 \ldots, d_k} \lambda_{e_1 \ldots, e_k}}{\prod_{i = 1}^k \Phi(d_i)\Phi(e_i)},
\end{align}
where the double-prime indicates that the sum is restricted to those $s_{i, j}$ which contribute to the sum, i.e. those coprime to $u_i, u_j, s_{i, a},$ and $s_{b, j}$ for all $a \neq j$ and $b \neq i$.

Define new variables
\[
y^{(m)}_{r_1, \ldots, r_k} := \left(\prod_{i = 1}^k \mu(r_i)g(r_i)\right) \sum_{\substack{d_1, \ldots, d_k \\ r_i \mid d_i \forall i \\ d_m = 1}} \frac{\lambda_{d_1, \ldots, d_k}}{\prod_{i = 1}^k \Phi(d_i)}.
\]
Then we can rewrite (\ref{dpsum}) as
\begin{align*}
\sum_{u_1, \ldots, u_k}\Big(\prod_{i = 1}^k g(u_i)\Big) \sideset{}{''}\sum_{s_{1, 2}, \ldots, s_{k-1, k}} &\Big(\prod_{\substack{1 \leq i, j \leq k \\ i \neq j}} \mu(s_{i, j})\Big) \times \\ & \Big(\prod_{i=1}^k \frac{\mu(a_i)}{g(a_i)}\Big)\Big(\prod_{j=1}^k \frac{\mu(b_j)}{g(b_j)}\Big) y^{(m)}_{a_1, \ldots, a_k} y^{(m)}_{b_1, \ldots, b_k},
\end{align*}
where $a_i = u_i\prod_{j \neq i}s_{i, j}$ and $b_j = u_j\prod_{i \neq j}s_{i, j}$. Recombining terms, we see that this is equal to
\begin{align}
\sum_{u_1, \ldots, u_k}\Big(\prod_{i = 1}^k \frac{\mu(u_i)^2}{g(u_i)}\Big) \sideset{}{''}\sum_{s_{1, 2}, \ldots, s_{k-1, k}} \Big(\prod_{\substack{1 \leq i, j \leq k \\ i \neq j}} \frac{\mu(s_{i, j})}{g(s_{i, j})^2}\Big) y^{(m)}_{a_1, \ldots, a_k} y^{(m)}_{b_1, \ldots, b_k}.
\end{align}
Let $y_{\max}^{(m)} := \max_{r_1, \ldots, r_k} |y_{r_1, \ldots, r_k}^{(m)}|$ and note that $y_{\max}^{(m)} \ll \frac{\Phi(W)}{W}\log R$; this follows from Lemma 2.6 of \cite{chlopt}. Using again the fact that $r \geq \ell$, we have
\[
\sum_{\substack{r \mid q^{\ell} - 1 \\ r \nmid q-1}} \frac{1}{r} \leq \frac{1}{\ell} \#\{\text{primes } p : p \mid q^\ell - 1\} = o(1),
\]
using the standard result that the number of distinct prime divisors of a natural number $n$ is $\ll \frac{\log n}{\log\log n}$. 

Putting everything together, we see that (\ref{mainterm}) is 
\begin{align*}
\ll \Bigg(\sum_{\substack{r \mid q^{\ell} - 1 \\ r \nmid q-1}} \frac{1}{r}\Bigg) \frac{q^\ell}{\ell\Phi(W)} &\Bigg(\sum_{\substack{u < R \\ (u, W) = 1}} \frac{\mu(u)^2}{g(u)}\Bigg)^{k-1}\Bigg(\sum_{s}\frac{\mu(s)^2}{g(s)^2}\Bigg)^{k(k-1)}\big(y_{\max}^{(m)}\big)^2 \\
&\ll \Bigg(\sum_{\substack{r \mid q^{\ell} - 1 \\ r \nmid q-1}} \frac{1}{r}\Bigg) \frac{q^\ell}{\ell\Phi(W)}\Big(\frac{\Phi(W)}{|W|}\Big)^{k+1}(\log R)^{k+1} \\
&= o\Big(q^\ell \frac{\Phi(W)^{k}}{|W|^{k+1}} (\log q^\ell)^k\Big),
\end{align*}
as desired.

\section{An example: Primitive polynomials over $\bF_2$}

We conclude by calculating an explicit bound on small gaps between primitive polynomials over $\bF_2$. Referring to the remark after Theorem 1.3 in \cite{chlopt}, any admissible 105-tuple $\cH$ of polynomials in $\bF_2[t]$ admits infinitely many shifts $f + \cH$, $f \in \bF_2[t]$, containing at least two primes. Let $\cH$ be a collection of 105 prime polynomials in $\bF_2[t]$ of norm greater than 105 (that is, of degree at least seven); it is easy to see that $\cH$ is admissible. By Gauss's formula for the number of irreducible polynomials of a given degree over a finite field, there are 104 irreducible polynomials of degree seven, eight or nine over $\bF_2$, so take $\cH$ to be a 105-tuple of primes of degree at least seven and at most ten.

To apply our method, we require in general that each element of $\cH$ be a multiple of the given primitive root $g$, and we may modify an admissible tuple $\cH$ to obtain an appropriate admissible tuple by replacing each $h \in \cH$ by $gh$. In the present case, with $g = t$ and $\cH$ the 105-tuple described above, this operation results in an admissible 105-tuple $\cH$ of polynomials of degree at least eight and at most eleven. Thus, with this choice of $\cH = \{h_1, h_2, \ldots, h_{105}\}$, one finds that there are infinitely many gaps of norm at most $N$ between primitive polynomials, where
\[
N \leq \max_{1 \leq i \neq j \leq 105} |h_i - h_j| \leq 2^{11} = 2048.
\]
For other choices of $g$ and $q$, this construction produces a bound of the form $q^{\deg(g) + 10}$.
\section*{Acknowledgements}

The author thanks Paul Pollack for guidance during the course of this project and for many helpful comments during the editing of this manuscript.

\bibliographystyle{amsalpha}
\bibliography{refs}

\end{document}